\newtheorem{thm}{Theorem}[section]
 \newtheorem{hypothesis}[thm]{Assumptions (H)}
\begin{document}

\title*{Relative entropy in diffusive relaxation\\ for a class of discrete velocities BGK models}
\author{Roberta Bianchini}
\institute{Roberta Bianchini \at Sorbonne Universit\'e, LJLL, UMR CNRS-UPMC 7598, 4 place Jussieu, 75252-Paris Cedex 05
(France) \& Consiglio Nazionale delle Ricerche, IAC, via dei Taurini 19 - 00185 Rome (Italy), \email{r.bianchini@iac.cnr.it}}
%
%
\maketitle

\abstract{We provide a general framework to extend the relative entropy method to a class of diffusive relaxation systems with discrete velocities. The methodology is detailed in the toy case of the 1D Jin-Xin model under the diffusive scaling, and provides a direct proof of convergence to the limit parabolic equation in any interval of time, in the regime where the solutions are smooth. Recently, the same approach has been successfully used to show the strong convergence of a vector-BGK model to the 2D incompressible Navier-Stokes equations.}

\section{Introduction}
\label{sec:intro}
The model under investigation in this paper is the so-called \emph{Jin-Xin relaxation system}, first introduced in \cite{XinJin}. This model is likely the simplest example of \emph{hyperbolic relaxation systems}, i.e. semilinear approximations to hyperbolic systems of balance laws, see for instance \cite{Mascia} for a review on the topic. We are interested in the diffusive-scaled version of this model, which reads as follows:
\begin{equation}
\label{scaled_JinXin}
\begin{cases}
\partial_t u + \partial_x v=0, \\
\varepsilon^2 \partial_t v+ \lambda^2 \partial_x u=f(u)-v,
\end{cases}
\end{equation}
where $\lambda>0$ is a positive constant, $u, v:(t, x) \in \mathbb{R}^+\times \mathbb{R} \rightarrow \mathbb{R}$, and $f(u): \mathbb{R} \rightarrow \mathbb{R}$ is a Lipschitz function satisfying the hypotheses below.
\begin{hypothesis}
\label{Assumptions_h}
$f(u): \mathbb{R} \rightarrow \mathbb{R}$ is a Lipschitz function such that:
\begin{itemize}
\item $f(0)=0;$
\item $f'(0)=a,$ which is a constant value independent of $\varepsilon, \lambda;$
\item $f(u)=au+h(u),$ where $h(u)$ is a polynomial function of order higher than or equal to 2.
\end{itemize}
\end{hypothesis}
After a quick glance, it is clear that the equilibrium of system (\ref{scaled_JinXin}) has to satisfy in a suitable sense the limit parabolic equation
\begin{equation}\label{eq:limit-parabolic}
\partial_t \bar{u}+\partial_x f(\bar{u})=\lambda^2 \partial_{xx} \bar{u}.
\end{equation}
We will use the relative entropy method for diffusive relaxation to establish this convergence in a rigorous way.\\
\indent Despite its simplicity, the Jin-Xin system (\ref{scaled_JinXin}), and in particular its hyperbolic-scaled version, has been intensively studied as a very good test case for relaxation models, first introduced by Whitham, \cite{Whitham} and then generalized by Chen, Levermore and Tai-Ping Liu \cite{CLL}. Far from being complete, we provide here some comments on a very shortened list of previous works. For the hyperbolic Jin-Xin system approximating scalar equations, we mention \cite{ChenLiu, XinJin, NataliniCPAM} and references within \cite{Mascia}, while the case of systems was developed in \cite{Stefano}. For the diffusive scaling considered in (\ref{scaled_JinXin}), the first investigations started with the papers \cite{Kurtz, McKean} on the Carleman equation, it was futher applied by Marcati and collaborators for the analysis of hyperbolic-parabolic relaxation limits for weak solutions of hyperbolic systems of balance laws \cite{Marcati}, while a two-velocities model is studied in \cite{Lions} by Lions and Toscani. We also quote \cite{BGN}, where the diffusion limit of (\ref{BGK_scaled_JinXin}) was studied by using monotonicity properties of the solution, and \cite{LinLu}, where initial data around a traveling wave were considered. 
The interest in studying the one-dimensional Jin-Xin model in this work is due to its equivalence to the BGK (Bhatnagar–Gross–Krook) discrete kinetic approximation of conservation laws, first established by Natalini in \cite{Natalini}. In the rest of the paper, we will mostly consider the BGK version of system (\ref{scaled_JinXin}), which is written below and is equivalent to (\ref{scaled_JinXin}) in the smooth regime. We point out that this equivalence between relaxation and discrete kinetic approximations of hyperbolic balance laws only holds in one space dimension.\\
The BGK formulation of system (\ref{scaled_JinXin}) is obtained by applying the change of variables
\begin{equation}
\label{change_BGK}
u=f_1^\varepsilon+f_2^\varepsilon, \qquad v=\dfrac{\lambda}{\varepsilon}(f_1^\varepsilon-f_2^\varepsilon),
\end{equation}
to system (\ref{scaled_JinXin}), which gives
\begin{equation}
\label{BGK_scaled_JinXin}
\begin{cases}
\partial_t f_1^\varepsilon+\dfrac{\lambda}{\varepsilon}\partial_x f_1^\varepsilon=\dfrac{1}{\varepsilon^2}(M_1(u)-f_1^\varepsilon), \\
\partial_t f_2^\varepsilon-\dfrac{\lambda}{\varepsilon}\partial_x f_2^\varepsilon=\dfrac{1}{\varepsilon^2}(M_2(u)-f_2^\varepsilon), \\
\end{cases}
\end{equation}
where the Maxwellian functions are given by:
\begin{equation}\label{Maxwellians_JinXin}
M_1(u)=\dfrac{u}{2}+\dfrac{\varepsilon f(u)}{2\lambda}, \qquad M_2(u)=\dfrac{u}{2}-\dfrac{\varepsilon f(u)}{2\lambda}.
\end{equation}
For later purposes, we introduce the hyperbolic-scaled version of the Jin-Xin model, which was investigated in several previous works \cite{NataliniCPAM, Chern, XinJin} and reads as follows:
\begin{equation}
\label{hyperbolic_Jin_Xin}
\begin{cases}
\partial_t u + \partial_x v=0, \\
\varepsilon^2 (\partial_t v+ \lambda^2 \partial_x u)=f(u)-v.
\end{cases}
\end{equation}
As proved in \cite{NataliniCPAM}, the equilibrium solution solves the conservation law
\begin{equation}
\label{hyperbolic_limit_JinXin}
\partial_t \bar{u}+\partial_x f(\bar{u})=0.
\end{equation}
A key point for proving convergence of (\ref{hyperbolic_Jin_Xin}) to the equilibrium solution is the \emph{subcharacteristic condition}, which is widely discussed in \cite{Mascia} and requires that the equilibrium velocities are controlled by the discrete $\lambda$, i.e.
$$\lambda \ge |f'(u)|, \; \text{for any   } u.$$
The role of this condition is to ensure stability, since a straightforward Chapman-Enskog expansion of system (\ref{hyperbolic_Jin_Xin}) shows that 
$$(\lambda^2-(f'(u))^2) \ge 0$$
is the coefficient of the parabolic term. A very nice connection of the subcharacteristic condition with BGK approximations for hyperbolic balance laws, introduced in \cite{Natalini} and generalized in \cite{Bouchut}, was found in \cite{Natalini}. In \cite{Natalini}, the author proves that a crucial hypothesis to get convergence of the BGK system to equilibrium (\ref{hyperbolic_limit_JinXin}) is the \emph{monotonicity of the Maxwellians, which is actually equivalent to the subcharacteristic condition}. The existence of a positively invariant region for the Jacobians of the Maxwellians and the existence of a family of convex entropies $\mathcal{E}$ for the hyperbolic equilibrium (\ref{hyperbolic_limit_JinXin}) were the ingredients to prove convergence of the Jin-Xin system, established by Serre in \cite{Serre}. At the same time, Bouchut in \cite{Bouchut} extended that result to a general class of BGK models. The main idea behind this is that \emph{the positivity of Jacobians of the the Maxwellians induces a diffeomorphims beetween the space of density functions $f_i$ and the equilibrium ${u}$}. This roughly tells us that we can construct convex entropies for the relaxation system, starting from a family of convex entropies for the hyperbolic equilibrium (\ref{hyperbolic_limit_JinXin}), as proved in \cite{Bouchut}.\\
In the explicit case of system (\ref{BGK_scaled_JinXin}), the Jacobians of the Maxwellians are given by
\begin{equation}
\label{Maxwellian_derivatives}
M_1'(u)=\dfrac{1}{2}+\dfrac{\varepsilon f'(u)}{2\lambda}, \qquad M_2'(u)=\dfrac{1}{2}-\dfrac{\varepsilon f'(u)}{2\lambda},
\end{equation}
therefore there exists a fixed $\varepsilon_0>0$ such that they are strictly positive for every $\varepsilon \le \varepsilon_0$, if $|f'(u)|$ is bounded. This satisfies the assumptions of Theorem 2.1 in \cite{Bouchut}, which provides the existence of a kinetic convex entropy $\mathcal{H}(\textbf{f}^{\; \varepsilon})=\mathcal{H}_1(f_1^\varepsilon)+\mathcal{H}_2(f_2^\varepsilon)$ for the diffusive Jin-Xin system in BGK formulation (\ref{BGK_scaled_JinXin}), where $\textbf{f}^{\; \varepsilon}=(f_1^\varepsilon, f_2^\varepsilon)$ has been defined in (\ref{change_BGK}). \\Denoting by $\textbf{M}(u)=(M_1(u), \; M_2(u))$ with $M_i(u), \, i=1,2$ in (\ref{Maxwellians_JinXin}), Theorem 2.1 in \cite{Bouchut} also states that
\begin{equation}
\label{mininum_H}
H(\textbf{f}^{\;\varepsilon}) \ge H(\textbf{M}(u)),
\end{equation}
which, together with the convexity property, provides the following entropy estimate
\begin{equation}
\label{1entropy inequality}
\partial_t \mathcal{H}(\textbf{f}^{\;\varepsilon})+\dfrac{\Lambda}{\varepsilon}\partial_x \mathcal{H}(\textbf{f}^{\;\varepsilon})=\dfrac{1}{\varepsilon^2} \mathcal{H}'(\textbf{f}^{\;\varepsilon}) (\textbf{M}(u)-\textbf{f}^{\;\varepsilon}) \le \dfrac{1}{\varepsilon^2} (\mathcal{H}(\textbf{M}(u))-\mathcal{H}(\textbf{f}^{\; \varepsilon})) \le 0.
\end{equation}
This entropy inequality can be seen as a discrete version of the Boltzmann H-Theorem and tells us that
\emph{the kinetic entropy $\mathcal{H}(\textbf{f}^{\; \varepsilon})$ is dissipative}, see for instance \cite{HN} for a rigorous definition.
More precisely, properties (E1)-(E2) and Theorem 2.1 in \cite{Bouchut} assure that, for any $\eta(u) \in \mathcal{E}$, defining the projector
\begin{equation}
\label{Projector}
\mathcal{P}=\left(\begin{array}{cc}
1 & 1 \\
0 & 0
\end{array}\right), \quad \text{such that} \quad \mathcal{P}\textbf{f}^{\; \varepsilon}=f_1^\varepsilon+f_2^\varepsilon=u,
\end{equation}
according with (\ref{change_BGK}),
\begin{equation}
\label{minimu_H_precise}
\eta(u)=\min_{\mathcal{P}\textbf{f}^{\; \varepsilon} = u} \mathcal{H}(\textbf{f}^{\; \varepsilon})=\mathcal{H}(\textbf{M}(u)).
\end{equation}
In this context, the Gibbs principle for relaxation implies that
\begin{equation}
\label{orthogonality_Tzavaras}
\mathcal{H}'(\textbf{M}(u)) \perp Ker(\mathcal{P}).
\end{equation}
Since $\textbf{f}^{\; \varepsilon}-\textbf{M}(u) \in Ker(\mathcal{P})$, the convexity property of $\mathcal{H}(\textbf{f}^{\; \varepsilon})$ together with condition (\ref{orthogonality_Tzavaras}) allow us to get the following inequality:
\begin{equation}
\label{dissipative_entropy}
\mathcal{H}'(\textbf{f}^{\; \varepsilon}) (\textbf{f}^{\; \varepsilon}-\textbf{M}(u)) \le -c |\textbf{f}^{\; \varepsilon}-\textbf{M}(u)|^2, \qquad c=c(\textbf{f}^{\; \varepsilon}),
\end{equation}
namely the kinetic entropy $\mathcal{H}(\textbf{M}(u))$ is \emph{strictly dissipative}, according to the definition given for instance in \cite{HN}.
This way, the entropy inequality for system (\ref{scaled_JinXin}) reads
\begin{equation}
\label{strictly_entropy_inequality}
\partial_t \mathcal{H}(\textbf{f}^{\; \varepsilon})+\dfrac{\Lambda}{\varepsilon}\partial_x \mathcal{H}(\textbf{f}^{\; \varepsilon}) \le - \dfrac{c}{\varepsilon^2}|\textbf{f}^{\; \varepsilon}-\textbf{M}(u)|^2.
\end{equation}
Together with the ones already mentioned before, other two inspiring papers for the present work are \cite{Tzavaras, Lattanzio}. In \cite{Tzavaras}, Tzavaras proved convergence of singular hyperbolic relaxation systems, endowed with a family of convex entropies, under the hyperbolic scaling. The proof relies on the  \emph{relative entropy method} and holds in the context of smooth solutions. We recall that the more general method of modulated energy was used by Brenier in \cite{Brenier} for the Vlasov-Poisson system, and see also \cite{Laure} for an interesting application to fluid-dynamics. Moreover, in \cite{Lattanzio}, Lattanzio and Tzavaras extended this method to diffusive relaxation, in the particular case of 1D compressible gas dynamics with friction.\\
This paper can be seen as an extension of \cite{Tzavaras} to the diffusive scaling, for a class of discrete velocities relaxation models, whose hyperbolic equilibrium admits a family of convex entropies. 
Here we choose to present the method through the 1D diffusive Jin-Xin system (\ref{BGK_scaled_JinXin}), since this basic model admits a simple and explicit convex entropy, which is a relaxation of the entropy of the hyperbolic equilibrium satisfying (\ref{hyperbolic_limit_JinXin}). We point out that in the case of more relevant BGKs, whose cone of the positive Maxwellians is invariant, the kinetic convex entropy provided by Theorem 2.1 in \cite{Bouchut} is not explicit, since it comes from the application of the Inverse Function Theorem. This is a major difficulty in applying the relative entropy method in that case, since the expression of the kinetic entropy is in general not known and depends on the vanishing diffusive parameter. This obstacle has been overcome in \cite{Bianchini3}, where the relative entropy method was applied to a vector-BGK model approximating the two-dimensional Navier-Stokes equations. The guidelines of the method developed in \cite{Bianchini3} are contained in the present work on the diffusive Jin-Xin model. We stress that although the approach is presented here for a scalar equation (\ref{limit_parabolic}), it can be easily extended to systems, as done in \cite{Bianchini3}.
\\
The result of this paper obtained by the relative entropy method is stated here.
\begin{theorem}\label{thm-main}
Let $\bar{u}$ be a smooth solution to 
$$\partial_t \bar{u}+\partial_x f(\bar{u})=\lambda^2 \partial_{xx} \bar{u},$$
with initial data $u_0 \in H^{s+1}(\mathbb{R}) \; s > \frac{1}{2}+1$, and 
consider a family of smooth solutions $\textbf{f}^{\; \varepsilon}$ to the discrete velocities diffusive BGK model (\ref{BGK_scaled_JinXin}), emanating from well-prepared initial data
$$\textbf{f}^{\; \varepsilon}_0=(\mathcal{M}_1(u_0), \, \mathcal{M}_2(u_0))=(\frac{u_0}{2}+\frac{\varepsilon f(u_0)}{2\lambda}-\frac{\varepsilon \lambda}{2}\partial_x u_0, \, \frac{u_0}{2}-\frac{\varepsilon f(u_0)}{2\lambda}+\frac{\varepsilon \lambda}{2}\partial_x u_0).$$
For any interval of time $[0, T]$, the following strong convergence result holds:
\begin{align*}
\sup_{t \in [0, T]} \|(u-\bar{u})(t)\|_{s'} \le c \varepsilon^{\frac{1}{2}-\frac{s'}{2s}},
\end{align*}
with $s' \in (0, s)$.
\end{theorem}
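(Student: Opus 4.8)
The plan is to run the relative entropy method at the kinetic level, measuring the distance between $\textbf{f}^{\;\varepsilon}$ and the corrected Maxwellian $\mathcal{M}(\bar u)=(\mathcal{M}_1(\bar u),\mathcal{M}_2(\bar u))$ whose components are exactly those appearing in the well-prepared data, now evaluated along the limit solution $\bar u$. Using the kinetic convex entropy $\mathcal{H}$ from Theorem 2.1 in \cite{Bouchut}, I would introduce the relative entropy
\[
\mathcal{H}(\textbf{f}^{\;\varepsilon}\mid \mathcal{M}(\bar u))=\mathcal{H}(\textbf{f}^{\;\varepsilon})-\mathcal{H}(\mathcal{M}(\bar u))-\mathcal{H}'(\mathcal{M}(\bar u))\cdot(\textbf{f}^{\;\varepsilon}-\mathcal{M}(\bar u)),
\]
and set $\Phi(t)=\int_{\mathbb{R}}\mathcal{H}(\textbf{f}^{\;\varepsilon}\mid\mathcal{M}(\bar u))\,dx$. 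Since $\mathcal{P}\,\mathcal{M}(\bar u)=\mathcal{M}_1(\bar u)+\mathcal{M}_2(\bar u)=\bar u$, the macroscopic discrepancy is the projection $u-\bar u=\mathcal{P}(\textbf{f}^{\;\varepsilon}-\mathcal{M}(\bar u))$, so the uniform convexity of $\mathcal{H}$ yields the control $\|(u-\bar u)(t)\|_{L^2}^2\le C\,\Phi(t)$. The whole game is therefore to prove $\Phi(t)\le C\varepsilon$ on $[0,T]$.

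Next I would differentiate $\Phi$ in time, inserting the BGK system (\ref{BGK_scaled_JinXin}) for $\partial_t\textbf{f}^{\;\varepsilon}$ and the parabolic equation for $\bar u$. The first term produces, via the strict dissipation inequality (\ref{strictly_entropy_inequality}), the good sink $-\tfrac{c}{\varepsilon^2}\|\textbf{f}^{\;\varepsilon}-\textbf{M}(u)\|_{L^2}^2$; the reference terms produce the residual of the corrected Maxwellian, $\mathrm{Res}=\partial_t\mathcal{M}(\bar u)+\tfrac{\Lambda}{\varepsilon}\partial_x\mathcal{M}(\bar u)-\tfrac{1}{\varepsilon^2}(\textbf{M}(\bar u)-\mathcal{M}(\bar u))$, together with singular transport and collision cross-terms. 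The key algebraic fact --- which is precisely the Chapman--Enskog matching encoded in the well-prepared data --- is that in $\mathrm{Res}$ the $O(1/\varepsilon)$ transport contribution $\tfrac{\lambda_i}{\varepsilon}\partial_x M_i(\bar u)$ is cancelled by the $O(1/\varepsilon)$ collision contribution $\tfrac{1}{\varepsilon^2}(\mathcal{M}_i(\bar u)-M_i(\bar u))$, while the surviving $O(1)$ part collapses, thanks to $\lambda\partial_x g_i=\mp\tfrac{\lambda^2}{2}\partial_{xx}\bar u$, exactly into $\tfrac12(\partial_t\bar u+\partial_x f(\bar u)-\lambda^2\partial_{xx}\bar u)=0$. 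Hence $\mathrm{Res}=\varepsilon R$ with $R$ bounded in terms of $\bar u$ and its derivatives.

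The delicate point --- and what I expect to be the main obstacle --- is the treatment of the remaining singular cross-terms, formally of size $1/\varepsilon$, which pair the macroscopic error and the corrector $g$ against the non-equilibrium part $\textbf{f}^{\;\varepsilon}-\textbf{M}(u)$. The strategy is to split $\textbf{f}^{\;\varepsilon}-\mathcal{M}(\bar u)=(\textbf{f}^{\;\varepsilon}-\textbf{M}(u))+(\textbf{M}(u)-\textbf{M}(\bar u))+(\textbf{M}(\bar u)-\mathcal{M}(\bar u))$, to integrate the genuine $x$-flux terms by parts (they vanish on $\mathbb{R}$ by decay), and to absorb each occurrence of $\textbf{f}^{\;\varepsilon}-\textbf{M}(u)$ into the dissipation by Young's inequality at weight $1/\varepsilon$: schematically $\tfrac{1}{\varepsilon}|g|\,|\textbf{f}^{\;\varepsilon}-\textbf{M}(u)|\le\tfrac{c}{4\varepsilon^2}|\textbf{f}^{\;\varepsilon}-\textbf{M}(u)|^2+C|g|^2$. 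One must then verify that every surviving $O(1)$ contribution carries a factor $u-\bar u$, and is therefore controlled by $\Phi$ itself, the genuinely inhomogeneous part being $O(\varepsilon)$ thanks to $\mathrm{Res}=\varepsilon R$ and the smoothness of $\bar u$. Getting all these balances to close --- in particular checking that no uncompensated $O(1)$ term remains --- is the heart of the argument, and the reason the explicit corrector is indispensable.

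Collecting the estimates yields a differential inequality of Gronwall type, $\tfrac{d}{dt}\Phi(t)\le C\,\Phi(t)+C\varepsilon$, where the smooth norms of $\bar u$ on $[0,T]$ enter the constant $C$. Since the data are well prepared, $\Phi(0)=0$, and Gronwall's lemma gives $\sup_{[0,T]}\Phi(t)\le C_T\,\varepsilon$, hence $\sup_{[0,T]}\|(u-\bar u)(t)\|_{L^2}\le C_T\,\varepsilon^{1/2}$. Finally I would upgrade this $L^2$ rate to the stated $H^{s'}$ rate by interpolation: standard uniform-in-$\varepsilon$ energy estimates for (\ref{BGK_scaled_JinXin}) and for the parabolic equation provide $\sup_{[0,T]}\|(u-\bar u)(t)\|_{s}\le C$, and the interpolation inequality $\|u-\bar u\|_{s'}\le\|u-\bar u\|_{L^2}^{1-s'/s}\,\|u-\bar u\|_{s}^{s'/s}$ with $s'\in(0,s)$ turns $\varepsilon^{1/2}$ into $\varepsilon^{(1/2)(1-s'/s)}=\varepsilon^{\frac12-\frac{s'}{2s}}$, which is exactly the claimed estimate.
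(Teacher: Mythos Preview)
Your relative-entropy computation is the same strategy as the paper's Theorem~3.1: differentiate $\Phi$, use the strict dissipation of $\mathcal{H}$, exploit that the corrected Maxwellian residual is $O(\varepsilon)$ by Chapman--Enskog matching, and close a Gronwall inequality yielding $\|u-\bar u\|_{L^2}\le C\sqrt{\varepsilon}$. The interpolation step to $H^{s'}$ is also identical. So the architecture is right.

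However, you gloss over two points that the paper treats as essential, and one of them is in fact the crux of the matter.

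First, you write that ``standard uniform-in-$\varepsilon$ energy estimates for (\ref{BGK_scaled_JinXin})'' give $\sup_{[0,T]}\|u\|_s\le C$. This is \emph{not} standard: the transport speeds in (\ref{BGK_scaled_JinXin}) are $\lambda/\varepsilon$, so naive $H^s$ estimates blow up as $\varepsilon\to0$. The paper obtains these bounds in Lemma~\ref{lemma_2_derivative_entropy} via a carefully chosen $\varepsilon$-dependent symmetrizer $\Sigma$ (equation (\ref{symmetrizer_JinXin})) that simultaneously symmetrizes the flux and renders the relaxation term negative semidefinite; the resulting energy $(\Sigma\textbf{w},\textbf{w})$ is what stays bounded, and only after unpacking it does one recover uniform control of $u$. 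The paper's concluding remark explicitly flags finding such a symmetrizer as ``the main difficulty'' of the method. Without it, you have neither the uniform $H^s$ bound needed for interpolation nor the $L^\infty$ bounds on $\textbf{f}^{\;\varepsilon}$ and $\partial_t\textbf{f}^{\;\varepsilon}$ that your own relative-entropy step silently uses (to guarantee convexity of $\mathcal{H}$, and to justify the $O(\varepsilon)$ remainders when Taylor-expanding around the Maxwellian).

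Second, the uniform bounds from the symmetrizer argument are \emph{a priori} only local, on some $[0,T^*]$ with $T^*$ depending on $\|u_0\|_{s+1}$. The paper closes the loop with a continuation argument: on $[0,T^*]$ the relative-entropy estimate plus interpolation give $|u(t)|_\infty\le |\bar u|_\infty+C\varepsilon^{1/2-\delta}$, which for small $\varepsilon$ is strictly below the threshold defining $T^*$, contradicting $T^*<T_\varepsilon$. This bootstrap is what upgrades the result from ``some short time'' to ``any fixed $[0,T]$''. Your write-up assumes global-in-$T$ bounds from the outset and so omits this step.
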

We end this introduction with some comments on the long-time behavior of the Jin-Xin system, which still provides convergence to a parabolic equation as (\ref{limit_parabolic}), with $\lambda^2-a^2$ instead of $\lambda^2$ as the diffusion coefficient. In this regard, the paper 
\cite{Zuazua} is a complete study of the long-time behavior of the Jin-Xin system for a class of functions $f(u) = |u|^{q-1} u, \; \text{with  } q \ge 2$. In \cite{BHN}, the authors extended this investigation to dissipative hyperbolic systems with convex entropies. Finally, the convergence of the Jin-Xin model (\ref{scaled_JinXin}) to the limit parabolic equation, both for large times and for the vanishing singular parameter, is proved in \cite{Bianchini} for a general class of functions $f(u)=au+h(u)$, with $h(u)$ quadratic.\\
\section{The relative entropy}
Let $(\bar{u}, \; \bar{v})$ be a smooth solution to the limit diffusive Jin-Xin system in (\ref{scaled_JinXin}). It is well-known, see for instance \cite{BGN, Bianchini}, that $(\bar{u}, \; \bar{v})$ solves the parabolic equation
\begin{equation}
\label{limit_parabolic}
\partial_t \bar{u}+\partial_x f(\bar{u})=\lambda^2 \partial_{xx}\bar{u}, \qquad \text{and} \qquad \bar{v}=f(\bar{u})-\lambda^2 \partial_x \bar{u}.
\end{equation}
The equilibria $\bar{u}, \bar{v}$ can be translated in terms of the densities,
\begin{equation}
\label{limit_f}
\bar{u}=\bar{f}_1+\bar{f}_2, \quad \bar{v}=\dfrac{\lambda}{\varepsilon}(\bar{f}_1-\bar{f}_2),
\end{equation}
where the densities at (diffusive) equilibrium are given by suitable corrections of the Maxwellians, as discussed for instance in \cite{GS} in the case of the Boltzmann equation and in \cite{Bianchini} for our model,
\begin{equation}
\label{limit_Maxwellians}
\bar{f}_1=M_1(\bar{u})-\dfrac{\varepsilon \lambda \partial_x \bar{u}}{2}=: \mathcal{M}_1(\bar{u}), \qquad  \bar{f}_2=M_2(\bar{u})+\dfrac{\varepsilon \lambda \partial_x \bar{u}}{2}=: \mathcal{M}_2(\bar{u}),
\end{equation}
and the expressions of $M_1(\bar{u}), M_2(\bar{u})$ are provided in (\ref{Maxwellians_JinXin}). We recall from the previous section that under the hypothesis of positivity of the Jacobians of the Maxwellians in (\ref{Maxwellian_derivatives}), system (\ref{BGK_scaled_JinXin}) admits a dissipative convex kinetic entropy.
Under such a framework, taking a smooth solution $\textbf{f}^{\; \varepsilon}=(f_1^\varepsilon, \, f_2^\varepsilon)$ to system (\ref{BGK_scaled_JinXin}) and a smooth solution $\bar{u}$ to (\ref{limit_parabolic}), a relative entropy is defined as follows,
\begin{equation}
\label{relative_entropy_def}
\tilde{\mathcal{H}}(\textbf{f}^{\; \varepsilon}|\bar{\textbf{f}})=\mathcal{H}(\textbf{f}^{\; \varepsilon})-\mathcal{H}(\mathcal{M}(\bar{u}))-\mathcal{H}'(\mathcal{M}(\bar{u})) \cdot (\textbf{f}^{\; \varepsilon}-\mathcal{M}(\bar{u})).
\end{equation}
We also define the relative entropy flux,
\begin{equation}
\label{relative_entropy_flux}
\begin{aligned}
\tilde{\mathcal{Q}}(\textbf{f}^{\; \varepsilon}| \bar{\textbf{f}})&=\dfrac{\lambda}{\varepsilon}(\mathcal{H}_1(f_1^\varepsilon)-\mathcal{H}_2(f_2^\varepsilon))-\dfrac{\lambda}{\varepsilon}(\mathcal{H}_1(\mathcal{M}_1(\bar{u}))-\mathcal{H}_2(\mathcal{M}_2(\bar{u})))\\
&-\dfrac{\lambda}{\varepsilon}\mathcal{H}'_1(\mathcal{M}_1(\bar{u})) \cdot (f_1^\varepsilon-\mathcal{M}_1(\bar{u}))+\dfrac{\lambda}{\varepsilon}\mathcal{H}'_2(\mathcal{M}_2(\bar{u})) \cdot (f_2^\varepsilon-\mathcal{M}_2(\bar{u})).
\end{aligned}
\end{equation}
The aim of this section is to provide the existence of a kinetic convex dissipative entropy for system (\ref{BGK_scaled_JinXin}).
\subsection{Overview on the linear case}
In the linear case, it is particularly easy to find an expression of a dissipative kinetic convex entropy for the Jin-Xin system and all the computations are explicit. Consider the diffusive Jin-Xin model with linear source term
\begin{equation}
\label{Jin-Xin_linear}
\begin{cases}
& \partial_t u + \dfrac{\partial_x(\varepsilon^2 v)}{\varepsilon^2}=0, \\
& \partial_t(\varepsilon^2 v)+\lambda^2 \partial_x u = au-\dfrac{\varepsilon^2 v}{\varepsilon^2},
\end{cases}
\end{equation}
approximating the linear convection-diffusion equation
\begin{equation}\label{eq:limit-conv-diff}
\partial_t \bar{u}+a \partial_x \bar{u}=\lambda^2 \partial_{xx}\bar{u}.
\end{equation}
Introducing $\textbf{u}=(u, \; \varepsilon^2 v)$, system (\ref{Jin-Xin_linear}) rewrites as
\begin{equation}
\label{JinXin_compact_linear}
\partial_t \textbf{u} + A \partial_x \textbf{u} = -B \textbf{u},
\end{equation}
where 
\begin{equation}
\label{matrices_Jin_Xin_compact_form_linear}
A=\left(\begin{array}{cc}
0 & \dfrac{1}{\varepsilon^2} \\
\lambda^2 & 0 \\
\end{array}\right), \qquad 
-B=\left(\begin{array}{cc}
0 & 0 \\
a & -\dfrac{1}{\varepsilon^2}
\end{array}\right).
\end{equation}
One can symmetrize system (\ref{JinXin_compact_linear}), by using a positive definite symmetric matrix
\begin{equation}
\label{symmetrizer_JinXin}
\Sigma=\left(\begin{array}{cc}
1 & a \varepsilon^2 \\
a \varepsilon^2 & \lambda^2 \varepsilon^2 \\
\end{array}\right), \quad \text{with    } A\Sigma = (A\Sigma)^T,
\end{equation}
and such that the dissipation of $B$ in (\ref{matrices_Jin_Xin_compact_form_linear}) is enhanced, see \cite{Bianchini, Bianchini1} for a detailed discussion,
\begin{equation*}
-B_1=-B\Sigma=\left(\begin{array}{cc}
0 & 0 \\
0 & a^2 \varepsilon^2 - \lambda^2
\end{array}\right).
\end{equation*}
Now, as usual, see for instance \cite{Benzoni},
\begin{equation}
\label{Sigma_inverse}
\Sigma^{-1}=\dfrac{1}{\lambda^2-a^2\varepsilon^2}\left(\begin{array}{cc}
\lambda^2 & -a \\
-a & \frac{1}{\varepsilon^2}
\end{array}\right)
\end{equation}
is a classical left symmetrizer for system (\ref{JinXin_compact_linear}), and so it can be viewed as the Hessian matrix of an entropy function for system (\ref{JinXin_compact_linear}), whose expression is given by
\begin{equation}
\label{entropy_Jin-Xin_linear}
E(\textbf{u})=\eta(u, \varepsilon^2 v)=\dfrac{\lambda^2}{2}u^2+\dfrac{\varepsilon^2}{2} v^2-au\varepsilon^2v.
\end{equation}
In terms of the kinetic variables, 
\begin{equation}
\label{entropy_Jin-Xin_f}
\begin{aligned}
E(u, \varepsilon^2 v)&=h(f_1^\varepsilon+f_2^\varepsilon, \, \lambda (f_1^\varepsilon-f_2^\varepsilon)/\varepsilon)\\
&=(f_1^\varepsilon)^2(\lambda^2-a\lambda \varepsilon) + (f_2^\varepsilon)^2(\lambda^2+a\lambda \varepsilon)=:\mathcal{H}_1(f_1^\varepsilon)+\mathcal{H}_2(f_2^\varepsilon)=\mathcal{H}(\textbf{f}^{\; \varepsilon}),
\end{aligned}
\end{equation}
i.e. $\mathcal{H}(\textbf{f}^{\; \varepsilon})$ is an explicit convex and dissipative kinetic entropy for the linear BGK model (\ref{BGK_scaled_JinXin}), with Maxwellian functions
\begin{equation}
\label{Maxwellians_linear_Jin-Xin}
M_1(u)=\frac{u}{2}+\frac{a\varepsilon u}{2\lambda}, \quad M_2(u)=\frac{u}{2}-\frac{a\varepsilon u}{2\lambda}.
\end{equation}
We recall that the existence of such an entropy $\mathcal{H}(\textbf{f}^{\; \varepsilon})$ is provided by the theory developed in \cite{Bouchut}.
According to Theorem 2.1 in \cite{Bouchut}, which tells us that the kinetic entropy is a relaxation of the entropy of the conservation law and that they coincide at equilibrium,
\begin{equation}
\begin{cases}
\mathcal{H}(M(\textbf{u}))=\mathcal{H}_1(M_1(u))+\mathcal{H}_2(M_2(u))=\eta(u)=\dfrac{1}{2}(\lambda^2-a^2\varepsilon^2)u^2, \\
\partial_{f_i^\varepsilon}\mathcal{H}_i(M_i(u))=\eta'(u)=(\lambda^2-a^2\varepsilon^2)u, \quad i=1,2.
\end{cases}
\end{equation}
However, a perturbed version of the original Maxwellians, $\mathcal{M}_1(u), \mathcal{M}_2(u)$ in (\ref{limit_Maxwellians}), naturally arises as equilibria of the diffusive-scaled system. In the linear case,
\begin{equation}
\label{perturbed_Max_linear_case}
\mathcal{M}_1(u)=\dfrac{u}{2}+\dfrac{a\varepsilon u}{2}-\dfrac{\varepsilon \lambda\partial_x u}{2}, \qquad \mathcal{M}_1(u)=\dfrac{u}{2}-\dfrac{a\varepsilon u}{2}+\dfrac{\varepsilon \lambda\partial_x u}{2},
\end{equation}
which implies that
\begin{equation}
\label{entropy_perturbed_Max}
\mathcal{H}_1'(\mathcal{M}_1(u))=\eta'(u)-\varepsilon \lambda^3 \partial_x u + O(\varepsilon^2), \qquad \mathcal{H}'_2(\mathcal{M}_2(u))=\eta'(u)+\varepsilon \lambda^3 \partial_x u + O(\varepsilon^2),
\end{equation}
where
$$\eta''(u)=\lambda^2+O(\varepsilon).$$
\subsection{The nonlinear case}\label{sec:nonlinear}
We go back to the nonlinear one-dimensional Jin-Xin model (\ref{scaled_JinXin}) and its BGK formulation (\ref{BGK_scaled_JinXin}).
In the nonlinear case, the existence of a kinetic, convex and dissipative entropy for the BGK system (\ref{BGK_scaled_JinXin}), in the vicinity of the local equilibrium $\textbf{f}^{\; \varepsilon}=\textbf{M}(u)$ is due to Theorem 2.1 in \cite{Bouchut}. However, Theorem 2.1 in \cite{Bouchut} only provides the existence of such an entropy, while its explicit expression is not known in the general case, being indeed a consequence of the Inverse Function Theorem, see \cite{Bianchini3} as an example with non-explicit kinetic entropies. In the particular case of the 1D Jin-Xin system, an explicit entropy can be shown. Nevertheless, in the following we will not rely on the expression of the kinetic entropy, in order to present the method in full generality.
The results below will be applied later on.
\begin{proposition}\label{lemma-positive-Max}
Consider the BGK system (\ref{BGK_scaled_JinXin}) under Assumptions (H).
Let $\eta(\bar{u})$ be a quadratic entropy for the hyperbolic limit of the Jin-Xin model (\ref{hyperbolic_limit_JinXin}). Then there exists a convex kinetic entropy $\mathcal{H}(\textbf{f}^{\; \varepsilon})$ for system (\ref{BGK_scaled_JinXin}), such that (\ref{minimu_H_precise})-(\ref{dissipative_entropy}) hold, as long as $u=f_1^\varepsilon+f_2^\varepsilon$ is bounded in $[0, T]$, for any fixed constant $T$.
\end{proposition}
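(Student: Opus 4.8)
The plan is to build the kinetic entropy $\mathcal{H}(\textbf{f}^{\;\varepsilon})$ by invoking Theorem 2.1 in \cite{Bouchut}, whose hypotheses reduce to the strict positivity of the Jacobians of the Maxwellians. Concretely, I would first observe that the derivatives $M_1'(u), M_2'(u)$ in (\ref{Maxwellian_derivatives}) depend on $f'(u)$, which under Assumptions (H) is Lipschitz with $f'(0)=a$. The working hypothesis that $u=f_1^\varepsilon+f_2^\varepsilon$ remains bounded on $[0,T]$ is exactly what lets me control $|f'(u)|$: since $f'$ is continuous and $u$ ranges in a compact set, $\sup_{t\in[0,T]}|f'(u)|\le C_T<\infty$. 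Then I would fix $\varepsilon_0>0$ small enough that $\varepsilon_0 C_T/(2\lambda)<1/2$, so that for every $\varepsilon\le\varepsilon_0$ both $M_1'(u)=\tfrac12+\tfrac{\varepsilon f'(u)}{2\lambda}$ and $M_2'(u)=\tfrac12-\tfrac{\varepsilon f'(u)}{2\lambda}$ are bounded below by a positive constant. This is the verification of the structural condition in \cite{Bouchut}.

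With the positivity of the Maxwellian Jacobians in hand, the second step is to apply Theorem 2.1 in \cite{Bouchut} directly. This supplies a convex kinetic entropy of the separated form $\mathcal{H}(\textbf{f}^{\;\varepsilon})=\mathcal{H}_1(f_1^\varepsilon)+\mathcal{H}_2(f_2^\varepsilon)$, together with the minimization identity that is precisely (\ref{minimu_H_precise}), namely $\eta(u)=\min_{\mathcal{P}\textbf{f}^{\;\varepsilon}=u}\mathcal{H}(\textbf{f}^{\;\varepsilon})=\mathcal{H}(\textbf{M}(u))$, since by hypothesis $\eta$ is a quadratic entropy for the hyperbolic limit (\ref{hyperbolic_limit_JinXin}). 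The content of \cite{Bouchut} is exactly that one can lift a convex entropy of the equilibrium conservation law to a kinetic convex entropy of the BGK system precisely when the Maxwellians are monotone, i.e. when their Jacobians are positive; so the quadratic $\eta$ plays the role of the prescribed equilibrium entropy and $\mathcal{H}$ is its kinetic relaxation.

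The third step is to derive the strict dissipation inequality (\ref{dissipative_entropy}). Here I would follow the reasoning already laid out in the introduction: the minimization property (\ref{minimu_H_precise}) says that $\textbf{M}(u)$ minimizes $\mathcal{H}$ over the affine fiber $\{\mathcal{P}\textbf{f}^{\;\varepsilon}=u\}$, which by a Lagrange-multiplier argument yields the Gibbs orthogonality (\ref{orthogonality_Tzavaras}), i.e. $\mathcal{H}'(\textbf{M}(u))\perp\mathrm{Ker}(\mathcal{P})$. Since $\textbf{f}^{\;\varepsilon}-\textbf{M}(u)\in\mathrm{Ker}(\mathcal{P})$, a second-order Taylor expansion of $\mathcal{H}$ about $\textbf{M}(u)$ combined with uniform strict convexity of $\mathcal{H}$ on the relevant bounded set produces (\ref{dissipative_entropy}) with a constant $c=c(\textbf{f}^{\;\varepsilon})>0$. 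The uniform lower bound on the Hessian of $\mathcal{H}$ again uses the boundedness of $u$ on $[0,T]$, which keeps all quantities in a compact region where $\mathcal{H}$ is smooth and strictly convex.

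The main obstacle I anticipate is ensuring that the constant $c$ in (\ref{dissipative_entropy}) can be taken \emph{uniform} in $\varepsilon$ and on $[0,T]$, rather than merely pointwise positive. This is delicate because the entropy $\mathcal{H}$ furnished by the Inverse Function Theorem in \cite{Bouchut} is not explicit and depends on $\varepsilon$ through the Maxwellians (\ref{Maxwellians_JinXin}); one must therefore argue that its Hessian stays bounded below uniformly as $\varepsilon\to0$, exploiting that at $\varepsilon=0$ the Maxwellians reduce to $u/2$ and the construction is non-degenerate. I expect this to follow from a compactness/continuity argument on the bounded range of $u$, but it is exactly where the standing assumption ``$u$ bounded in $[0,T]$'' is indispensable, since without it neither $f'(u)$ nor the strict-convexity constant could be controlled.
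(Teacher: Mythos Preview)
Your proposal is correct and follows essentially the same approach as the paper: verify that the boundedness of $u$ on $[0,T]$ forces $|f'(u)|$ to be bounded, hence $M_1'(u),M_2'(u)>0$ for $\varepsilon$ small, and then invoke Theorem~2.1 in \cite{Bouchut} to produce the convex kinetic entropy with properties (\ref{minimu_H_precise})--(\ref{dissipative_entropy}). The paper's own proof is in fact just the first of your three steps followed by a one-line citation of \cite{Bouchut}; your additional explanation of how (\ref{dissipative_entropy}) follows from the Gibbs orthogonality and strict convexity, and your remark on the $\varepsilon$-uniformity of $c$, are elaborations beyond what the paper records but entirely in the same spirit.
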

\begin{proof}
We apply Theorem 2.1 in \cite{Bouchut}, which yields the result provided that
the derivatives of the Maxwellians $M_i(u), \; i=1,2$ in (\ref{Maxwellians_JinXin}) are positive, 
$$M_1'(u)=\dfrac{1}{2}+\dfrac{\varepsilon f'(u)}{2\lambda}>0 , \qquad M_2'(u)=\dfrac{1}{2}-\dfrac{\varepsilon f'(u)}{2\lambda}>0,$$
which hold as long as $|u(t)|_\infty=|f_1^\varepsilon(t)+f_2^\varepsilon(t)|_\infty$ is bounded. 
\end{proof}
\begin{lemma}
\label{lemma_2_derivative_entropy}
Let $\eta(\bar{u})$ be a quadratic entropy for the conservation law (\ref{hyperbolic_limit_JinXin}). Under Assumptions (H), there exists an interval of time $[0, T^*]$, where $T^*$ depends on $\|u_0\|_{s+1}, \; s>\frac{1}{2}+1$, such that system (\ref{BGK_scaled_JinXin}) admits a kinetic convex dissipative entropy  $$\mathcal{H}(\textbf{f}{\;^\varepsilon})=\mathcal{H}_1(f_1^\varepsilon)+\mathcal{H}_2(f_2^\varepsilon),$$ with
$$\mathcal{H}(\textbf{M}(u))=\eta(u), \qquad \mathcal{H}_1(M_1(u))=\mathcal{H}_2(M_2(u))=\eta'(u).$$
Moreover, 
\begin{equation}
\label{2_derivative_entropy}
|\mathcal{H}_i''(M_i(u))|_\infty = 2 |\eta''(u)|_\infty+O(\varepsilon), \quad i=1,2.
\end{equation}
\end{lemma}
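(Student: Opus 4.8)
The plan is to build the kinetic entropy $\mathcal{H}(\textbf{f}^{\;\varepsilon})$ via Theorem 2.1 in \cite{Bouchut} and then track the $\varepsilon$-dependence of its second derivative at the Maxwellian. First I would invoke Proposition \ref{lemma-positive-Max}: on an interval $[0,T^*]$ where the solution $u=f_1^\varepsilon+f_2^\varepsilon$ stays bounded in $L^\infty$ — a bound that follows from a short-time well-posedness argument in the high-regularity space $H^{s+1}$ with $s>\frac12+1$ (so that $H^{s+1}\hookrightarrow L^\infty$), giving the dependence of $T^*$ on $\|u_0\|_{s+1}$ — the positivity conditions $M_1'(u)>0$ and $M_2'(u)>0$ from \eqref{Maxwellian_derivatives} hold for $\varepsilon\le\varepsilon_0$. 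This produces a convex kinetic entropy $\mathcal{H}=\mathcal{H}_1+\mathcal{H}_2$ together with the normalizations $\mathcal{H}(\textbf{M}(u))=\eta(u)$ and $\mathcal{H}_i'(M_i(u))=\eta'(u)$, which are exactly the identities \eqref{minimu_H_precise} together with the Gibbs/orthogonality relation \eqref{orthogonality_Tzavaras} specialized to the quadratic $\eta$.

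Next I would differentiate these defining relations to extract $\mathcal{H}_i''(M_i(u))$. The cleanest route is to differentiate the equilibrium identity $\mathcal{H}_i'(M_i(u))=\eta'(u)$ in $u$: by the chain rule,
\begin{equation*}
\mathcal{H}_i''(M_i(u))\,M_i'(u)=\eta''(u),\qquad i=1,2,
\end{equation*}
so that
\begin{equation*}
\mathcal{H}_i''(M_i(u))=\frac{\eta''(u)}{M_i'(u)}.
\end{equation*}
Now I would substitute the explicit derivatives $M_i'(u)=\frac12\pm\frac{\varepsilon f'(u)}{2\lambda}$ from \eqref{Maxwellian_derivatives}. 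Since $M_i'(u)=\frac12\bigl(1\pm\tfrac{\varepsilon f'(u)}{\lambda}\bigr)$, a geometric expansion in the small parameter gives $\tfrac{1}{M_i'(u)}=2\bigl(1\mp\tfrac{\varepsilon f'(u)}{\lambda}+O(\varepsilon^2)\bigr)$, whence $\mathcal{H}_i''(M_i(u))=2\eta''(u)+O(\varepsilon)$. Taking $L^\infty$ norms and using that $f'(u)$ is bounded on the (bounded) range of $u$ yields exactly \eqref{2_derivative_entropy}. As a consistency check one may verify this against the explicit linear-case entropy \eqref{entropy_Jin-Xin_f}, where $\mathcal{H}_i(f_i^\varepsilon)=(f_i^\varepsilon)^2(\lambda^2\mp a\lambda\varepsilon)$ gives $\mathcal{H}_i''\equiv 2(\lambda^2\mp a\lambda\varepsilon)=2\eta''(u)+O(\varepsilon)$ with $\eta''=\lambda^2+O(\varepsilon)$, in agreement.

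The dissipativity claim is then the one already packaged in the earlier discussion: convexity of $\mathcal{H}$ plus the orthogonality \eqref{orthogonality_Tzavaras} of $\mathcal{H}'(\textbf{M}(u))$ to $\mathrm{Ker}(\mathcal{P})$, combined with $\textbf{f}^{\;\varepsilon}-\textbf{M}(u)\in\mathrm{Ker}(\mathcal{P})$, yields the strict dissipation estimate \eqref{dissipative_entropy}, so I would simply cite that chain of reasoning rather than reprove it.

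I expect the main obstacle to be a subtle one rather than a computational one: the statement asserts $|\mathcal{H}_i''(M_i(u))|_\infty=2|\eta''(u)|_\infty+O(\varepsilon)$ uniformly, and this uniformity is only legitimate on the interval $[0,T^*]$ where $u$ remains bounded, because both the positivity margin of $M_i'(u)$ and the size of $f'(u)$ — hence the constant hidden in $O(\varepsilon)$ — degrade if $|u|_\infty$ blows up. The care required is therefore in coupling the a priori $L^\infty$ (equivalently $H^{s+1}$) bound on $u$ to the quantitative positivity of the Maxwellian derivatives, ensuring that $M_i'(u)$ stays bounded away from zero uniformly in $t\in[0,T^*]$ and in $x$, so that division by $M_i'(u)$ in the chain-rule step is controlled and the expansion constant is genuinely $\varepsilon$-independent. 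This is precisely where the regularity threshold $s>\frac12+1$ and the definition of $T^*$ enter.
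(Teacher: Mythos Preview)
Your chain-rule derivation of \eqref{2_derivative_entropy} is correct and in fact cleaner than the paper's: differentiating $\mathcal{H}_i'(M_i(u))=\eta'(u)$ in $u$ and dividing by $M_i'(u)=\tfrac12\pm\tfrac{\varepsilon f'(u)}{2\lambda}$ is exactly the right move, and the expansion $\mathcal{H}_i''(M_i(u))=2\eta''(u)+O(\varepsilon)$ follows immediately once $|u|_\infty$ is controlled. The paper takes a slightly more roundabout route through $f_i^\varepsilon=M_i(u)+O(\varepsilon)$, but ends at the same identity.

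The real gap is in the first paragraph, where you write that the $L^\infty$ bound on $u$ ``follows from a short-time well-posedness argument in the high-regularity space $H^{s+1}$''. This is where essentially all the work in the paper's proof lives, and it is \emph{not} a routine local existence statement. The BGK system \eqref{BGK_scaled_JinXin} has transport speeds $\pm\lambda/\varepsilon$ and a source of size $1/\varepsilon^2$; a black-box semilinear hyperbolic local theory would produce an existence time $T^*_\varepsilon$ that shrinks with $\varepsilon$, which is useless here. The lemma asserts that $T^*$ depends only on $\|u_0\|_{s+1}$, i.e.\ is uniform in $\varepsilon$, and obtaining this uniformity is precisely the point. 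The paper does it by passing to the $(u,\varepsilon^2 v)$ variables, introducing the $\varepsilon$-weighted symmetrizer $\Sigma$ from \eqref{symmetrizer_JinXin}, and running energy estimates in the resulting symmetric form; the weights in $\Sigma$ are what absorb the singular coefficients and yield the $\varepsilon$-uniform bounds \eqref{ineq:energy-estimate-s}--\eqref{estimate-time-derivative-M}. Without this step, your positivity margin for $M_i'(u)$ and hence your $O(\varepsilon)$ constant are only controlled on an $\varepsilon$-dependent interval, and the lemma as stated does not follow.

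You correctly flag that the delicate issue is keeping $|u|_\infty$ bounded uniformly on $[0,T^*]$, but you misidentify where the difficulty lies: it is not in the division by $M_i'(u)$ once the bound is in hand, it is in getting an $\varepsilon$-independent $T^*$ at all. Supplying the symmetrizer-based energy argument (or citing it from \cite{Bianchini}) would close the gap; the rest of your proposal then goes through.
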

\begin{proof}
The idea is to prove that the assumptions of Proposition \ref{lemma-positive-Max} are satisfied. Therefore we need to show that there exists a fixed interval of time $[0, T^*]$, with $T^*$ independent of $\varepsilon$, such that $|u(t)|_\infty$ is bounded for $t \le T^*$. 
For the diffusive Jin-Xin model, this is actually already proved (for global times) in \cite{Bianchini}, using a detailed description of the Green function of the linearized system. However, we perform here a classical local in time proof in order to be self-contained. \\
Defining $\textbf{w}$ such that $\textbf{u}=\Sigma \textbf{w}$, the compact system is symmetric and reads
\begin{align*}
\partial_t \textbf{w}+\tilde{A} \partial_x \textbf{w}=-\tilde{B} \textbf{w}
+\begin{pmatrix}
0\\
h((\Sigma\textbf{w})_1),
\end{pmatrix}
\end{align*}
where
\begin{align*}
\tilde{A}=A\Sigma=\tilde{A}^T=
\begin{pmatrix}
a & \lambda^2 \\
\lambda^2 & a \varepsilon^2 \lambda^2
\end{pmatrix},  \quad
\tilde{B}=B\Sigma=
\begin{pmatrix}
0 & 0 \\
0 & \lambda^2-a^2 \varepsilon^2
\end{pmatrix}.
\end{align*}
Moreover, simple computations show that $\Sigma$ is positive definite and, more precisely,
\begin{equation}\label{ineq:sigma-w}
\frac{1}{2}\|w_1\|_0^2 + \varepsilon^2 (\lambda^2 - 2a^2 \varepsilon^2) \|w_2\|_0^2 \le (\Sigma \textbf{w}, \textbf{w})_0 \le (1+a\varepsilon^2) \|w_1\|_0^2 + \varepsilon^2 (a+\lambda^2)\|w_2\|_0^2.
\end{equation}
Then one gets the following energy estimates on the symmetric compact system:
\begin{equation}\label{ineq:energy-estimate-s}
\begin{aligned} 
\frac{1}{2}\|w_1(t)\|_s^2 & + \varepsilon^2 (\lambda^2 - 2a^2 \varepsilon^2) \|w_2(t)\|_s^2 + (\lambda^2-a^2\varepsilon^2) \int_0^t \|w_2(\tau)\|_s^2 \, d\tau \\
& \le \|u_0\|_s^2 + \varepsilon^2 c(\|u_0\|_{s+1})\\
& + c(\|w_1+a\varepsilon^2 w_2\|_{L^\infty_{t,x}}^2) \int_0^t \|w_1(\tau)  + a \varepsilon^2 w_2(\tau) \|_s^2 \, d \tau;
\end{aligned}
\end{equation}
\begin{align*}
\frac{1}{2}\|\partial_t w_1(t)\|_{s-1}^2 & + \varepsilon^2 (\lambda^2 - 2a^2 \varepsilon^2) \|\partial_t w_2(t)\|_{s-1}^2 + (\lambda^2-a^2\varepsilon^2) \int_0^t \|\partial_\tau w_2(\tau)\|_{s-1}^2 \, d\tau \\
& \le \|\partial_t u|_{t=0}\|_{s-1}^2\\
& + c(\|w_1+a\varepsilon^2 w_2\|_{L^\infty_{t,x}}^2) \int_0^t \|\partial_\tau w_1(\tau)  + a \varepsilon^2 \partial_\tau w_2(\tau) \|_{s-1}^2 \, d\tau \\
& \le \|\partial_x f(u_0) - \lambda^2 \partial_{xx} u_0\|_{s-1}^2\\
& + c(\|w_1+a\varepsilon^2 w_2\|_{L^\infty_{t,x}}^2) \int_0^t \|\partial_\tau w_1(\tau)  + a \varepsilon^2 \partial_\tau w_2(\tau) \|_{s-1}^2 \, d\tau \\
\end{align*}
\begin{equation}\label{ineq:energy-estimate-s-1-time-derivative}
\begin{aligned}
& \le c\|u_0\|_{s+1}^2 \\
& + c(\|w_1+a\varepsilon^2 w_2\|_{L^\infty_{t,x}}^2) \int_0^t \|\partial_\tau w_1(\tau)  + a \varepsilon^2 \partial_\tau w_2(\tau) \|_{s-1}^2 \, d\tau.
\end{aligned} 
\end{equation}
This provides
\begin{align*}
\frac{1}{2}\|w_1(t)\|_s^2 + \varepsilon^2 (\lambda^2 - 2a^2 \varepsilon^2) \|w_2(t)\|_s^2  \le (\|u_0\|_s^2 + \varepsilon^2 c(\|u_0\|_{s+1})) e^{c(\|w_1+a\varepsilon^2 w_2\|_{L^\infty_{t,x}}^2)t},\\
\frac{1}{2}\|\partial_t w_1(t)\|_{s-1}^2 + \varepsilon^2 (\lambda^2 - 2a^2 \varepsilon^2) \|\partial_t w_2(t)\|_{s-1}^2 \le c\|u_0\|_{s+1}^2 e^{c(\|w_1+a\varepsilon^2 w_2\|_{L^\infty_{t,x}}^2)t},
\end{align*}
as an application of the Gronwall inequality.
Now set $\|u_0\|_{s+1}=:M_0$ and, for a fixed constant $M>M_0$, define
\begin{equation}\label{def-time}
T^*:=\sup_{ t \in [0, T_\varepsilon)} \Bigg\{\frac{1}{2}\|w_1(t)\|_\infty + \varepsilon^2 (\lambda^2 - 2a^2 \varepsilon^2) \|w_2(t)\|_\infty \le c_S M \Bigg\},
\end{equation}
being $T_\varepsilon$ the maximum existence time of $\textbf{f}^{\; \varepsilon}$ and $c_S$ the Sobolev embedding constant.
The Sobolev Embedding Theorem provides the following estimates for $t \in [0, T^*],$
\begin{equation}\label{estimate-M}
\frac{1}{2}\|w_1(t)\|_\infty^2 + \varepsilon^2 (\lambda^2 - 2a^2 \varepsilon^2) \|w_2(t)\|_\infty^2 \le (M_0^2+\varepsilon^2 c(M_0)) e^{c(M^2)t} \le c_S^2 M^2, 
\end{equation}
\begin{equation}\label{estimate-space-derivative-M}
\frac{1}{2}\|\partial_x w_1(t)\|_\infty^2 + \varepsilon^2 (\lambda^2 - 2a^2 \varepsilon^2) \|\partial_x w_2(t)\|_\infty^2 \le c M_0^2 e^{c(M^2)t} \le c_S^2 M^2, 
\end{equation}
\begin{equation}\label{estimate-time-derivative-M}
\frac{1}{2}\|\partial_t w_1(t)\|_\infty^2 + \varepsilon^2 (\lambda^2 - 2a^2 \varepsilon^2) \|\partial_t w_2(t)\|_\infty^2 \le c M_0^2 e^{c(M^2)t} \le c_S^2 M^2,
\end{equation}
where $T^* \le \dfrac{1}{c(M^2)}\log  \Bigg(\dfrac{c_S^2 M^2}{cM_0^2}\Bigg).$\\
Now, using the changes of variables \eqref{change_BGK} and $\textbf{u}=(u, \varepsilon^2 v)=\Sigma \textbf{w}$, one gets
\begin{align*}
f_1^\varepsilon=\dfrac{w_1}{2 \lambda}(\lambda+a\varepsilon) + \frac{\varepsilon w_2}{2}(\lambda+a \varepsilon), \\
f_1^\varepsilon=\dfrac{w_1}{2 \lambda}(\lambda-a\varepsilon) - \frac{\varepsilon w_2}{2}(\lambda - a \varepsilon),
\end{align*}
and so, thanks to (\ref{estimate-M}), 
$$\sup_{t \in [0, T^*]} \max\{ \|f_i^\varepsilon(t)\|_\infty, \; i=1,2\} \le c(M),$$
which allows us to apply Proposition \ref{lemma-positive-Max} to prove the first of part of this lemma. It remains to show the last expansion.
Notice from (\ref{estimate-space-derivative-M})-(\ref{estimate-time-derivative-M}) that $|\varepsilon \partial_t f_i^\varepsilon + \lambda \partial_x f_i^\varepsilon)|$ is bounded in $[0, T^*]$, then
\begin{align*}
& f_1^\varepsilon=M_1(u)+\varepsilon(\varepsilon \partial_t f_1^\varepsilon + \lambda \partial_x f_1^\varepsilon)=M_1(u)+O(\varepsilon)=\frac{u}{2}+\frac{\varepsilon f(u)}{2 \lambda} + O(\varepsilon),\\
& f_2^\varepsilon=M_2(u)+\varepsilon(\varepsilon \partial_t f_2^\varepsilon - \lambda \partial_x f_2^\varepsilon)=M_2(u)+O(\varepsilon)=\frac{u}{2}-\frac{\varepsilon f(u)}{2 \lambda} + O(\varepsilon),
\end{align*}
in the sense of the $L^\infty_x$ norm. From Proposition \ref{lemma-positive-Max},
$$\partial_{f_i^\varepsilon}H_i(f_i^\varepsilon)|_{f_i^\varepsilon=M_i(u)}=\partial_{f_i^\varepsilon}H_i(M_i(u))=\partial_{f_i^\varepsilon}H_i(\frac{u}{2}\pm \varepsilon \frac{f(u)}{2 \lambda})=g(\frac{u}{2}\pm \varepsilon \frac{f(u))}{2 \lambda})=\eta'(u).$$
Recalling that $u=f_1^\varepsilon+f_2^\varepsilon$, 
$$\partial^2_{f_i^\varepsilon}H_i(f_i^\varepsilon) = \frac{1}{2}g'(\frac{u}{2}+O(\varepsilon))=\eta''(u),$$
i.e.
$$\partial^2_{f_i^\varepsilon}H_i(f_i^\varepsilon)=2\eta''(u)+O(\varepsilon).$$
\end{proof}
\begin{lemma}\label{lemma-perturbed-maxwellians}
Let $$\mathcal{H}(\textbf{f}^{\; \varepsilon})=\mathcal{H}_1(f_1^\varepsilon)+\mathcal{H}_2(f_2^\varepsilon)$$ be a kinetic convex dissipative entropy associated with system (\ref{BGK_scaled_JinXin}). Let $\bar{u}$ be a smooth solution to system (\ref{limit_parabolic}) and $\eta(\bar{u})$ a convex entropy for equation (\ref{hyperbolic_limit_JinXin}). Then, for $i=1,2$,
\begin{equation*}
\mathcal{H}_i(\mathcal{M}_i(\bar{u}))=\mathcal{H}_i (M_i(\bar{u})\mp \dfrac{\varepsilon \lambda}{2}\partial_x \bar{u})=\mathcal{H}_i(M_i(\bar{u}))\mp \eta'(\bar{u}) \dfrac{\varepsilon \lambda}{2}\partial_x \bar{u}+O(\varepsilon^2),
\end{equation*}
\begin{equation*}
\mathcal{H}_i'(\mathcal{M}_i(\bar{u}))=\eta'(\bar{u}) \mp \varepsilon \lambda \eta''(\bar{u}) \partial_x \bar{u}+O(\varepsilon^2),
\end{equation*}
in the sense of the $L^\infty$ norm.
\end{lemma}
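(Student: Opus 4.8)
The plan is to reduce both identities to Taylor expansions of the smooth scalar functions $\mathcal{H}_i$ about the base point $M_i(\bar u)$, using that the shift separating the corrected Maxwellian $\mathcal{M}_i(\bar u)$ from $M_i(\bar u)$ is uniformly small. Indeed, by the definition (\ref{limit_Maxwellians}) of the corrected Maxwellians the first displayed equality is immediate, since $\mathcal{M}_i(\bar u)=M_i(\bar u)\mp\frac{\varepsilon\lambda}{2}\partial_x\bar u$; writing $\delta_i:=\mp\frac{\varepsilon\lambda}{2}\partial_x\bar u$, the smoothness of the solution $\bar u$ to (\ref{limit_parabolic}) guarantees $\partial_x\bar u\in L^\infty$, hence $\|\delta_i\|_\infty=O(\varepsilon)$ uniformly on $[0,T]$. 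This is the single quantity that drives every remainder below.

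For the value of the entropy I would Taylor expand to second order,
\[
\mathcal{H}_i(M_i(\bar u)+\delta_i)=\mathcal{H}_i(M_i(\bar u))+\mathcal{H}_i'(M_i(\bar u))\,\delta_i+\frac{1}{2}\mathcal{H}_i''(\xi_i)\,\delta_i^2,
\]
with $\xi_i$ between $M_i(\bar u)$ and $\mathcal{M}_i(\bar u)$. By Lemma \ref{lemma_2_derivative_entropy} the identity $\mathcal{H}_i'(M_i(u))=\eta'(u)$ holds for every bounded argument, so evaluating at $u=\bar u$ (bounded, being a smooth solution) the linear term equals exactly $\mp\eta'(\bar u)\frac{\varepsilon\lambda}{2}\partial_x\bar u$, while the quadratic term carries the factor $\delta_i^2=O(\varepsilon^2)$ and is $O(\varepsilon^2)$ as soon as $\mathcal{H}_i''$ is bounded on the range at hand. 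This yields the first claim. The derivative identity follows from the same scheme applied to $\mathcal{H}_i'$,
\[
\mathcal{H}_i'(M_i(\bar u)+\delta_i)=\mathcal{H}_i'(M_i(\bar u))+\mathcal{H}_i''(M_i(\bar u))\,\delta_i+\frac{1}{2}\mathcal{H}_i'''(\zeta_i)\,\delta_i^2 .
\]
Here $\mathcal{H}_i'(M_i(\bar u))=\eta'(\bar u)$ again, while (\ref{2_derivative_entropy}) gives $\mathcal{H}_i''(M_i(\bar u))=2\eta''(\bar u)+O(\varepsilon)$; multiplying by $\delta_i=\mp\frac{\varepsilon\lambda}{2}\partial_x\bar u$ produces the announced linear term $\mp\varepsilon\lambda\,\eta''(\bar u)\partial_x\bar u$ plus an $O(\varepsilon^2)$ error, and the cubic remainder is $O(\varepsilon^2)$ provided $\mathcal{H}_i'''$ is bounded. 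All estimates are understood in $L^\infty_x$ and, since the norms of $\bar u$ are controlled on $[0,T]$, they are uniform there.

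The main obstacle is exactly the uniform control of $\mathcal{H}_i''$ and $\mathcal{H}_i'''$ needed to absorb the Taylor remainders, because in the nonlinear case $\mathcal{H}_i$ is not explicit but is produced by Theorem 2.1 in \cite{Bouchut} through the Inverse Function Theorem. I would close this point by recalling that $M_i$ is smooth with $M_i'=\frac{1}{2}+O(\varepsilon)$ bounded away from zero (positivity of the Jacobians, cf. Proposition \ref{lemma-positive-Max}), so that $M_i^{-1}$ is smooth and the defining relation $\mathcal{H}_i'(\xi)=\eta'(M_i^{-1}(\xi))$ gives, upon differentiating,
\[
\mathcal{H}_i''(\xi)=\frac{\eta''(p)}{M_i'(p)},\qquad
\mathcal{H}_i'''(\xi)=\frac{\eta'''(p)M_i'(p)-\eta''(p)M_i''(p)}{(M_i'(p))^{3}},\qquad p=M_i^{-1}(\xi).
\]
Since $\eta$ is a smooth entropy with bounded derivatives (indeed quadratic, so $\eta''$ is constant and $\eta'''=0$), $M_i'$ is bounded below, and $M_i''=O(\varepsilon)$ by (\ref{Maxwellians_JinXin}), both expressions are bounded uniformly in $\varepsilon\le\varepsilon_0$ over the compact set swept by $M_i(\bar u)$, which is compact because $\|\bar u\|_\infty<\infty$. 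This closes the remainder bounds and completes the argument; the computation incidentally shows $\mathcal{H}_i'''=O(\varepsilon)$ in the quadratic case, so the derivative remainder is in fact $O(\varepsilon^{3})$.
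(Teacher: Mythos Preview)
Your argument is correct and follows exactly the route indicated in the paper: a Taylor expansion of $\mathcal{H}_i$ and $\mathcal{H}_i'$ around $M_i(\bar u)$, combined with the identity $\mathcal{H}_i'(M_i(u))=\eta'(u)$ (and, for the second expansion, the relation $\mathcal{H}_i''(M_i(u))=2\eta''(u)+O(\varepsilon)$ from Lemma~\ref{lemma_2_derivative_entropy}). Your explicit control of the remainders via the formulas $\mathcal{H}_i''(\xi)=\eta''(p)/M_i'(p)$ and $\mathcal{H}_i'''(\xi)=\big(\eta'''(p)M_i'(p)-\eta''(p)M_i''(p)\big)/(M_i'(p))^{3}$ is a welcome addition that the paper leaves implicit.
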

\begin{proof}
It comes directly from a Taylor expansion and the use of $\mathcal{H}_i'(M_i(u))=\eta'(u)$ for any $u$.
\end{proof}
\section{Relative entropy estimate}
The global in time convergence is achieved by means of the relative entropy estimate, which is stated and proved below.
\begin{theorem}\label{thm:relative-entropy-estimate}
Consider the discrete velocities BGK model (\ref{BGK_scaled_JinXin}) under Assumptions (H), endowed with a kinetic convex dissipative entropy $\mathcal{H}(\textbf{f}^{\; \varepsilon})$ for $t \le T^*$ fixed. Let $\bar{u}$ be a smooth solution to
$$\partial_t \bar{u}+\partial_x f(\bar{u})=\lambda^2 \partial_{xx}\bar{u},$$
with initial data $u_0 \in H^{s+1}(\mathbb{R}), \; s>\frac{1}{2}+1$, and let $\textbf{f}^{\; \varepsilon}(t)=(f_1^\varepsilon, \, f_2^\varepsilon), \; t \le T^*$ be a sequence of solutions to the discrete velocities BGK model \eqref{BGK_scaled_JinXin}, emanating from smooth well-prepared initial data 
$$\textbf{f}^{\; \varepsilon}_0=(\mathcal{M}_1(u_0), \, \mathcal{M}_2(u_0)),$$
and such that 
$$|\textbf{f}^{\; \varepsilon}(t)|_\infty \, |\partial_t \textbf{f}^{\; \varepsilon}(t)|_\infty \le M, \; t \le T^*, \;  \text{for any constant   } M>0 \text{  independent of  } \varepsilon.$$
Then, the following stability estimate holds:
\begin{equation}\label{eq:relative-entropy-estimate}
\sup_{t \in [0, T^*]} \|u-\bar{u}\|_0 \le c \sqrt{\varepsilon} e^{c(|\bar{u}|_\infty, \, |\partial_x \bar{u}|_\infty)t}.
\end{equation}
\end{theorem}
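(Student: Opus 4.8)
The plan is to run the relative entropy method on the functional $\tilde{\mathcal{H}}(\textbf{f}^{\; \varepsilon}|\bar{\textbf{f}})$ of (\ref{relative_entropy_def}): derive a Gronwall-type inequality for $t \mapsto \int_{\mathbb{R}} \tilde{\mathcal{H}}\,dx$ and then convert the bound on $\tilde{\mathcal{H}}$ into the $L^2$ estimate on $u-\bar{u}$. Two preliminary observations drive everything. First, by the strict convexity of $\mathcal{H}$ — guaranteed by the positive lower bound on $\mathcal{H}_i''$ coming from (\ref{2_derivative_entropy}) and the dissipativity (\ref{dissipative_entropy}) — a second order Taylor expansion at $\mathcal{M}(\bar{u})$ gives the pointwise bound $\tilde{\mathcal{H}}(\textbf{f}^{\; \varepsilon}|\bar{\textbf{f}}) \ge c\,|\textbf{f}^{\; \varepsilon}-\mathcal{M}(\bar{u})|^2$; since $\mathcal{P}(\textbf{f}^{\; \varepsilon}-\mathcal{M}(\bar{u}))=u-\bar{u}$ by (\ref{Projector}) and (\ref{limit_f}), this yields $\|u-\bar{u}\|_0^2 \le c^{-1}\int_{\mathbb{R}}\tilde{\mathcal{H}}\,dx$. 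Second, the well-prepared data force $\textbf{f}^{\; \varepsilon}_0=\mathcal{M}(u_0)=\bar{\textbf{f}}|_{t=0}$, so $\int \tilde{\mathcal{H}}\,dx$ vanishes at $t=0$ and the whole estimate reduces to controlling its growth.

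The core step is the relative entropy identity. I would differentiate (\ref{relative_entropy_def}) in time, eliminate $\partial_t \textbf{f}^{\; \varepsilon}$ through the BGK system (\ref{BGK_scaled_JinXin}) and $\partial_t \bar{u}$ through the parabolic equation (\ref{limit_parabolic}) inside $\partial_t \mathcal{M}(\bar{u})$, and organize the result as $\partial_t \tilde{\mathcal{H}} + \partial_x \tilde{\mathcal{Q}} = (\text{source})$ with $\tilde{\mathcal{Q}}$ given by (\ref{relative_entropy_flux}). After integrating over $\mathbb{R}$ the flux drops and one is left with three families of terms: (i) the relaxation contribution, which upon writing $M_i(u)-f_i^\varepsilon=(M_i(u)-\mathcal{M}_i(\bar{u}))-(f_i^\varepsilon-\mathcal{M}_i(\bar{u}))$ splits into the strictly negative dissipation $-\frac{c}{\varepsilon^2}\int|\textbf{f}^{\; \varepsilon}-\mathcal{M}(\bar{u})|^2\,dx$ and a cross term; (ii) transport terms carrying a factor $\varepsilon^{-1}\partial_x\mathcal{M}(\bar{u})$; and (iii) a consistency residual measuring the failure of $\mathcal{M}(\bar{u})$ to solve (\ref{BGK_scaled_JinXin}) exactly.

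The hard part, and the essential difference from the hyperbolic scaling of \cite{Tzavaras}, is that the cross term in (i) and the transport terms in (ii) are individually singular, of sizes $O(\varepsilon^{-2})$ and $O(\varepsilon^{-1})$. I expect the main obstacle to be the cancellation of these singularities. At order $\varepsilon^{-2}$ the mechanism is clean: using $\mathcal{H}_i''(\mathcal{M}_i(\bar{u}))=2\eta''(\bar{u})+O(\varepsilon)$ from Lemma \ref{lemma_2_derivative_entropy} together with $M_i(u)-\mathcal{M}_i(\bar u)=\frac{1}{2}(u-\bar u)+O(\varepsilon)$ and the conservation identity $\sum_i (f_i^\varepsilon-\mathcal{M}_i(\bar u))=u-\bar u$, the dissipation and the leading cross term combine into $-\frac{\eta''}{\varepsilon^2}\int\big[(f_1^\varepsilon-\mathcal{M}_1(\bar u))-(f_2^\varepsilon-\mathcal{M}_2(\bar u))\big]^2\,dx = -\frac{\eta''}{\lambda^2}\int (v-\bar v)^2\,dx$, a genuine $O(1)$ diffusive dissipation (here $\frac{\lambda}{\varepsilon}(f_1^\varepsilon-f_2^\varepsilon)=v$ and $\frac{\lambda}{\varepsilon}(\mathcal{M}_1(\bar u)-\mathcal{M}_2(\bar u))=\bar v$). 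At order $\varepsilon^{-1}$ one must combine the $O(\varepsilon)$ corrections left over from (i) with the transport terms (ii), using the diffusive correction $\mp\frac{\varepsilon\lambda}{2}\partial_x\bar u$ built into $\mathcal{M}_i(\bar u)$ in (\ref{limit_Maxwellians}) and the expansion $\mathcal{H}_i'(\mathcal{M}_i(\bar u))=\eta'(\bar u)\mp\varepsilon\lambda\eta''(\bar u)\partial_x\bar u+O(\varepsilon^2)$ of Lemma \ref{lemma-perturbed-maxwellians}; these are exactly the terms whose cancellation is forced by the fact that $\mathcal{M}(\bar u)$ solves (\ref{BGK_scaled_JinXin}) up to an $O(\varepsilon)$ residual (the singular $\frac{\lambda}{2\varepsilon}\partial_x\bar u$ produced by transport being annihilated by the one produced by relaxation), while the Gibbs orthogonality (\ref{orthogonality_Tzavaras}) guarantees that no first-order term in $\textbf{f}^{\; \varepsilon}-\mathcal{M}(\bar u)$ survives beyond the residual.

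Once the singular terms are removed, the remaining contributions are $O(1)$ or $O(\varepsilon)$. The $O(1)$ pieces are quadratic in $\textbf{f}^{\; \varepsilon}-\mathcal{M}(\bar u)$ with coefficients controlled by $|\bar u|_\infty$ and $|\partial_x \bar u|_\infty$, hence bounded by $C(|\bar u|_\infty,|\partial_x\bar u|_\infty)\int\tilde{\mathcal{H}}\,dx$ via the lower bound of the first step, while the $O(\varepsilon)$ residual, paired against $\mathcal{H}'(\textbf{f}^{\; \varepsilon})-\mathcal{H}'(\mathcal{M}(\bar u))=O(|\textbf{f}^{\; \varepsilon}-\mathcal{M}(\bar u)|)$ and estimated by Young's inequality, yields an absorbable quadratic term plus a forcing of order $\varepsilon$; the uniform bounds $|\textbf{f}^{\; \varepsilon}|_\infty|\partial_t\textbf{f}^{\; \varepsilon}|_\infty\le M$ and the smoothness of $\bar u$ are what keep all these coefficients uniform in $\varepsilon$ (the relation $f_i^\varepsilon-M_i(u)=O(\varepsilon)$, established in the proof of Lemma \ref{lemma_2_derivative_entropy}, is what lets one exchange $M_i(u)$ and $f_i^\varepsilon$ in the cross terms). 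The outcome is a differential inequality $\frac{d}{dt}\int\tilde{\mathcal{H}}\,dx \le C(|\bar u|_\infty,|\partial_x\bar u|_\infty)\int\tilde{\mathcal{H}}\,dx + C\varepsilon$; Gronwall with the vanishing initial datum gives $\int\tilde{\mathcal{H}}(t)\,dx \le C\varepsilon\,e^{C(|\bar u|_\infty,|\partial_x\bar u|_\infty)t}$, and combining with $\|u-\bar u\|_0^2\le c^{-1}\int\tilde{\mathcal{H}}\,dx$ produces exactly (\ref{eq:relative-entropy-estimate}).
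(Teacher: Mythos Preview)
Your proposal is correct and follows essentially the same relative entropy strategy as the paper: differentiate $\tilde{\mathcal{H}}$, use the BGK equations and the expansions of Lemmas~\ref{lemma_2_derivative_entropy}--\ref{lemma-perturbed-maxwellians} to cancel the singular contributions, arrive at a Gronwall inequality with an $O(\varepsilon)$ forcing, and convert back via the convexity lower bound.

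The only notable difference is bookkeeping. The paper groups $\partial_t\tilde{\mathcal{H}}+\partial_x\tilde{\mathcal{Q}}$ into the four natural pieces $I_1,\dots,I_4$ of (\ref{relative_entropy_def})--(\ref{relative_entropy_flux}); the dissipation from $I_1$ alone is already $O(1)$, namely $-\tfrac{\eta''(\bar u)}{\lambda^2}|\varepsilon^2\partial_t v+\lambda^2\partial_x u|^2$, and the cross pieces $I_2+I_3+I_4$ are then combined with it to complete the square $-\tfrac{\eta''(\bar u)}{\lambda^2}\big(\varepsilon^2\partial_t v+\lambda^2\partial_x u-(\varepsilon^2\partial_t\bar v+\lambda^2\partial_x\bar u)\big)^2$, the only surviving $O(1)$ remainder being the relative flux $-\eta''(\bar u)\,\partial_x\bar u\,[f(u)-f(\bar u)-f'(\bar u)(u-\bar u)]$. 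You instead re-center the relaxation term around $\mathcal{M}(\bar u)$ and track orders in $\varepsilon$, obtaining the equivalent dissipation $-\tfrac{\eta''}{\lambda^2}(v-\bar v)^2$ directly. Both routes lead to the same Gronwall inequality; the paper's square-completion makes the structure of the remainder (the relative flux) a bit more explicit, while your $\varepsilon$-order accounting makes the mechanism of the singular cancellations more transparent.
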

\begin{proof}
In the following, we drop the apex $\varepsilon$ for simplicity.
\begin{align*}
\partial_t \tilde{\mathcal{H}}(\textbf{f}^{\; \varepsilon}| \overline{\textbf{f}})&+\partial_x \tilde{\mathcal{Q}}(\textbf{f}^{\; \varepsilon}|\bar{\textbf{f}})\\
&=\partial_t (\mathcal{H}_1(f_1)+\mathcal{H}_2(f_2))+\dfrac{\lambda}{\varepsilon}\partial_x (\mathcal{H}_1(f_1)-\mathcal{H}_2(f_2))\\
&-\partial_t (\mathcal{H}_1(\overline{\mathcal{M}}_1)+\mathcal{H}_2(\overline{\mathcal{M}}_2))-\dfrac{\lambda}{\varepsilon} \partial_x (\mathcal{H}_1(\overline{\mathcal{M}}_1)-\mathcal{H}_2(\overline{\mathcal{M}}_2))\\
&-\partial_t (\mathcal{H}_1'(\overline{\mathcal{M}}_1) (f_1-\overline{\mathcal{M}}_1))-\partial_t (\mathcal{H}_2'(\overline{\mathcal{M}}_2) (f_2-\overline{\mathcal{M}}_2))\\
&-\dfrac{\lambda}{\varepsilon}\partial_x(\mathcal{H}_1'(\overline{\mathcal{M}}_1)(f_1-\overline{\mathcal{M}}_1))+\dfrac{\lambda}{\varepsilon}\partial_x(\mathcal{H}_2'(\overline{\mathcal{M}}_1)(f_2-\overline{\mathcal{M}}_2))\\
&=:I_1+I_2+I_3+I_4.
\end{align*}
We start with $I_1$, whose estimate is based on Lemma \ref{lemma_2_derivative_entropy}.
\begin{equation}
\label{I1}
\begin{aligned}
I_1&=\partial_t (\mathcal{H}_1(f_1)+\mathcal{H}_2(f_2))+\dfrac{\lambda}{\varepsilon}\partial_x (\mathcal{H}_1(f_1)-\mathcal{H}_2(f_2)) \\
&=\dfrac{\mathcal{H}_1'(f_1)}{\varepsilon^2} (M_1-f_1) + \dfrac{\mathcal{H}_2'(f_2)}{\varepsilon^2} (M_2-f_2)\\
&=\dfrac{\mathcal{H}_1'(M_1)+\mathcal{H}_1''(M_1) (f_1-M_1)}{\varepsilon^2} (M_1-f_1)\\
& + \dfrac{\mathcal{H}_2'(M_2)+\mathcal{H}''(M_2) (f_2-M_2)}{\varepsilon^2} (M_2-f_2) + O(\varepsilon)\\
&\le - \dfrac{(2\eta''({u})+O(\varepsilon))}{\varepsilon^2} (|f_1-{M}_1|^2+|f_2-{M}_2|^2)+O(\varepsilon)\\
&=- \dfrac{2\eta''({u})}{2 \lambda^2} |v-f(u)|^2+O(\varepsilon)\\
&=- \dfrac{\eta''({u})}{\lambda^2} |\varepsilon^2\partial_t v + \lambda^2 \partial_x u|^2+O(\varepsilon)\\
&=- \dfrac{\eta''(\bar{u})}{\lambda^2} |\varepsilon^2\partial_t v + \lambda^2 \partial_x u|^2+O(\varepsilon),
\end{aligned}
\end{equation}
where the last equalities follow from $\varepsilon^2\partial_t v + \lambda^2 \partial_x u=v-f(u)$ in (\ref{scaled_JinXin}) and  $O(\varepsilon^{-1} \|f_i-M_i\|^2_\infty)=O(\varepsilon)$, which is due to the $L^\infty$ bounds on $\textbf{f}^{\; \varepsilon}, \, \partial_t \textbf{f}^{\; \varepsilon}$.\\
Notice that equality $\eta''(u)=\eta''(\bar{u})$ holds since we chose a quadratic entropy $\eta(u)$ for system (\ref{hyperbolic_Jin_Xin}). For scalar conservation laws, there are indeed infinite entropy-entropy flux pairs $(\eta(\bar{u}), \, Q(\bar{u}))$ satisfying
$\eta'(\bar{u}) f'(\bar{u})=Q'(\bar{u}).$ We remind to \cite{Bianchini3} for an application of the method in a more constrained case.\\
Now we deal with $I_2$, by using the expansions of Lemma \ref{lemma-perturbed-maxwellians}.
\begin{equation}
\label{I2}
\begin{aligned}
I_2&=-\partial_t (\mathcal{H}_1(\overline{\mathcal{M}}_1)+\mathcal{H}_2(\overline{\mathcal{M}}_2))-\dfrac{\lambda}{\varepsilon} \partial_x (\mathcal{H}_1(\overline{\mathcal{M}}_1)-\mathcal{H}_2(\overline{\mathcal{M}}_2))\\
&=-[\mathcal{H}'_1(\overline{\mathcal{M}}_1) (\partial_t \overline{\mathcal{M}}_1 + \dfrac{\lambda}{\varepsilon} \partial_x \overline{\mathcal{M}}_1)-\mathcal{H}'_2(\overline{\mathcal{M}}_2) (\partial_t \overline{\mathcal{M}}_2 - \dfrac{\lambda}{\varepsilon} \partial_x \overline{\mathcal{M}}_2)]\\
&=-[\eta'(\bar{u})-\varepsilon \lambda \eta''(\bar{u}) \partial_x \bar{u}] (\partial_t \overline{\mathcal{M}}_1 + \dfrac{\lambda}{\varepsilon} \partial_x \overline{\mathcal{M}}_1)\\
&-[\eta'(\bar{u})+\varepsilon \lambda \eta''(\bar{u}) \partial_x \bar{u}] (\partial_t \overline{\mathcal{M}}_2 - \dfrac{\lambda}{\varepsilon} \partial_x \overline{\mathcal{M}}_2)+O(\varepsilon)\\
&=-\eta'(\bar{u})[\partial_t (\overline{\mathcal{M}}_1 + \overline{\mathcal{M}}_2) + \frac{\lambda}{\varepsilon} \partial_x (\overline{\mathcal{M}}_1-\overline{\mathcal{M}}_2)]\\
&- \varepsilon \lambda \eta''(\bar{u}) \partial_x \bar{u} [-\partial_t \overline{\mathcal{M}}_1+\partial_t \overline{\mathcal{M}}_2-\frac{\lambda}{\varepsilon} \partial_x \overline{\mathcal{M}}_1 - \frac{\lambda}{\varepsilon} \partial_x \overline{\mathcal{M}}_2]+O(\varepsilon)\\
&=-\eta'(\bar{u})[\partial_t \bar{u}+\partial_x f(\bar{u})-\lambda^2 \partial_{xx}\bar{u}]\\
&-\varepsilon \lambda\eta''(\bar{u}) \partial_x \bar{u}[-\frac{\lambda}{\varepsilon}\partial_x \bar{u}+O(\varepsilon)]+O(\varepsilon) \\
&= \eta''(\bar{u}) \lambda^2 (\partial_x \bar{u})^2+O(\varepsilon),
\end{aligned}
\end{equation}
where the last equalities are provided by the explicit expressions of $\overline{\mathcal{M}}_1, \overline{\mathcal{M}}_2$ in (\ref{limit_f}).
\begin{equation}
\label{I_3}
\begin{aligned}
I_3&=-\partial_t (\mathcal{H}_1'(\overline{\mathcal{M}}_1) (f_1-\overline{\mathcal{M}}_1))-\partial_t (\mathcal{H}_2'(\overline{\mathcal{M}}_2) (f_2-\overline{\mathcal{M}}_2))\\
&=-\partial_t[\eta'(\bar{u}) (u-\bar{u})-\varepsilon \lambda \eta''(\bar{u})\partial_x \bar{u}\cdot(f_1-f_2-(\overline{\mathcal{M}}_1-\overline{\mathcal{M}}_2))]+O(\varepsilon^2)\\
&=- \partial_t [\eta'(\bar{u}) (u-\bar{u})]+\varepsilon^2 \eta''(\bar{u}) \partial_x \bar{u} \cdot \partial_t[v-f(\bar{u})+\lambda^2 \partial_x \bar{u}] + O(\varepsilon)\\
&=-  \partial_t [\eta'(\bar{u}) (u-\bar{u})]+\eta''(\bar{u})\partial_x \bar{u} \cdot (\varepsilon^2 \partial_t v + \lambda^2 \partial_x u)-\eta''(\bar{u}) \lambda^2\partial_x \bar{u} \cdot \partial_x u+O(\varepsilon)\\
&=\eta''(\bar{u}) [\partial_x f(\bar{u})-\lambda^2 \partial_{xx}\bar{u}] \cdot (u-\bar{u})+\eta'(\bar{u}) \cdot (\partial_xv-\partial_xf(\bar{u})+\lambda^2 \partial_{xx} \bar{u})\\
&+\eta''(\bar{u}) \partial_x \bar{u} \cdot (\varepsilon^2 \partial_t v+\lambda^2 \partial_x u)-\eta''(\bar{u})\lambda^2 \partial_x \bar{u}\cdot  \partial_x u + O(\varepsilon).
\end{aligned}
\end{equation}
We are finally left with $I_4$.
$$
\begin{aligned}
I_4&=-\dfrac{\lambda}{\varepsilon}\partial_x(\mathcal{H}_1'(\overline{\mathcal{M}}_1)(f_1-\overline{\mathcal{M}}_1))+\dfrac{\lambda}{\varepsilon}\partial_x(\mathcal{H}_2'(\overline{\mathcal{M}}_1)(f_2-\overline{\mathcal{M}}_2))\\
&=\dfrac{\lambda}{\varepsilon} \partial_x [(-\eta'(\bar{u})+\varepsilon \lambda \eta''(\bar{u})\partial_x \bar{u}) \cdot (f_1-\overline{\mathcal{M}}_1)\\
&+(\eta'(\bar{u})+\varepsilon \lambda \eta''(\bar{u})\partial_x \bar{u}) \cdot (f_2-\overline{\mathcal{M}}_2)]+O(\varepsilon)\\
&=-\dfrac{\lambda}{\varepsilon} \partial_x[\eta'(\bar{u})\cdot (f_1-f_2-(\overline{\mathcal{M}}_1-\overline{\mathcal{M}}_2))]\\
&+\lambda^2 \eta''(\bar{u}) \partial_{x}[\partial_x \bar{u} \cdot (f_1+f_2-(\overline{\mathcal{M}}_1+\overline{\mathcal{M}}_2))]\\
\end{aligned}
$$
$$
\begin{aligned}
&=-\partial_x [\eta'(\bar{u}) \cdot (v-f(\bar{u})+\lambda^2 \partial_x \bar{u})]+\lambda^2\eta''(\bar{u}) \partial_x [\partial_x\bar{u} \cdot (u-\bar{u})]\\
&=-\eta'(\bar{u})\cdot (\partial_xv-\partial_xf(\bar{u})+\lambda^2 \partial_{xx} \bar{u}) - \eta''(\bar{u}) \partial_x \bar{u} \cdot (v-f(\bar{u})+\lambda^2 \partial_x \bar{u})\\
\end{aligned}
$$
\begin{equation}
\label{I_4}
\begin{aligned}
& + \lambda^2 \eta''(\bar{u}) \partial_{xx}\bar{u} \cdot (u-\bar{u}) + \lambda^2 \eta''(\bar{u}) \partial_{x}\bar{u} \cdot \partial_x(u-\bar{u})\\
&=-\eta'(\bar{u})\cdot (\partial_xv-\partial_xf(\bar{u})+\lambda^2 \partial_{xx} \bar{u}) - \eta''(\bar{u}) \partial_x \bar{u} \cdot (v-f(u))\\
& -\eta''(\bar{u}) \partial_x \bar{u} \cdot (f(u)-f(\bar{u}))-\lambda^2 \eta''(\bar{u}) (\partial_x \bar{u})^2\\
& + \lambda^2 \eta''(\bar{u}) \partial_{xx}\bar{u} \cdot (u-\bar{u}) + \lambda^2 \eta''(\bar{u}) \partial_x \bar{u} \cdot \partial_x u - \lambda^2 \eta''(\bar{u}) (\partial_x \bar{u})^2\\
&=-\eta'(\bar{u})\cdot (\partial_xv-\partial_xf(\bar{u})+\lambda^2 \partial_{xx} \bar{u}) +\eta''(\bar{u})\partial_x \bar{u} \cdot (\varepsilon^2 \partial_t v+\lambda^2 \partial_x u)\\
&-\eta''(\bar{u}) \partial_x \bar{u} \cdot (f(u)-f(\bar{u}))-2 \lambda^2 \eta''(\bar{u}) (\partial_x \bar{u})^2\\
&+ \lambda^2 \eta''(\bar{u}) \partial_{xx}\bar{u} \cdot (u-\bar{u}) + \lambda^2 \eta''(\bar{u}) \partial_x \bar{u} \cdot \partial_x u.
\end{aligned}
\end{equation}
As an intermediate step, let us look at the sum
\begin{equation}
\begin{aligned}
I_3+I_4&=2\eta''(\bar{u})\partial_x \bar{u} \cdot (\varepsilon^2 \partial_t v+\lambda^2 \partial_x u)\\
&-\eta''(\bar{u})\partial_x \bar{u} \cdot[f(u)-f(\bar{u})-f'(\bar{u}) \cdot (u-\bar{u})]\\
&-2 \lambda^2 \eta''(\bar{u}) (\partial_x \bar{u})^2.
\end{aligned}
\end{equation}
The total sum reads:
\begin{align*}
I_1&+I_2+I_3+I_4 \le -\dfrac{\eta''(\bar{u})}{\lambda^2}|\varepsilon^2\partial_t v+ \lambda^2 \partial_x u|^2+ \eta''(\bar{u}) \lambda^2 (\partial_x \bar{u})^2\\
&-2 \lambda^2 \eta''(\bar{u}) (\partial_x \bar{u})^2\\
&+2\eta''(\bar{u})\partial_x \bar{u} \cdot (\varepsilon^2 \partial_t v+\lambda^2 \partial_x u)\\
&-\eta''(\bar{u})\partial_x \bar{u} \cdot[f(u)-f(\bar{u})-f'(\bar{u}) \cdot (u-\bar{u})]\\
&=-\dfrac{\eta''(\bar{u})}{\lambda^2}|\varepsilon^2\partial_t v+ \lambda^2 \partial_x u|^2- \lambda^2 \eta''(\bar{u}) (\partial_x \bar{u})^2\\
&+2\eta''(\bar{u})\partial_x \bar{u} \cdot (\varepsilon^2 \partial_t v+\lambda^2 \partial_x u)\\
&-\eta''(\bar{u})\partial_x \bar{u} \cdot[f(u)-f(\bar{u})-f'(\bar{u}) \cdot (u-\bar{u})]\\
\end{align*}
\begin{equation}
\begin{aligned}
&=-\dfrac{\eta''(\bar{u})}{\lambda^2}|\varepsilon^2\partial_t v+ \lambda^2 \partial_x u|^2-\dfrac{\eta''(\bar{u})}{\lambda^2} |\lambda^2\partial_x \bar{u}+\varepsilon^2 \partial_t \bar{v}|^2+O(\varepsilon) \\
&+\dfrac{2}{\lambda^2}\eta''(\bar{u})\lambda^2 \partial_x \bar{u} \cdot (\varepsilon^2 \partial_t v+\lambda^2 \partial_x u)+\frac{2}{\lambda^2} \eta''(\bar{u}) \varepsilon^2 \partial_t \bar{v} \cdot (\varepsilon^2 \partial_t v + \lambda^2 \partial_x u)+O(\varepsilon)\\
&-\eta''(\bar{u})\partial_x \bar{u} \cdot[f(u)-f(\bar{u})-f'(\bar{u}) \cdot (u-\bar{u})]\\
&\le c(|\bar{u}|, \, |\partial_x \bar{u}|)|u-\bar{u}|^2 \\
&-\dfrac{\eta''(\bar{u})}{\lambda^2}(\varepsilon^2 \partial_t v + \lambda^2 \partial_x u - (\varepsilon^2 \partial_t \bar{v}+\lambda^2 \partial_x \bar{u}))^2+O(\varepsilon).
\end{aligned}
\end{equation}
We end up with the following estimate
\begin{align*}
\int_\mathbb{R}\tilde{\mathcal{H}}(\textbf{f}^{\; \varepsilon}| \overline{\textbf{f}})(t) \, dx & + \frac{1}{\lambda^2} \int_0^t \int_\mathbb{R} \eta''(\bar{u}) (\varepsilon^2 \partial_\tau v + \lambda^2 \partial_x u - (\varepsilon^2 \partial_\tau \bar{v}+\lambda^2 \partial_x \bar{u}))^2 \,  d\tau \, dx\\
& \le \int_0^t \int_\mathbb{R} c(|\bar{u}|, \, |\partial_x \bar{u}|)|u-\bar{u}|^2 \, dx \, ds + \int_\mathbb{R}\tilde{\mathcal{H}}(\textbf{f}^{\; \varepsilon}_0| \overline{\textbf{f}}_0) \, dx + O(\varepsilon)\\
&= \int_0^t\int_\mathbb{R} c(|\bar{u}|, \, |\partial_x \bar{u}|)|u-\bar{u}|^2 \, dx\, ds +O(\varepsilon),
\end{align*}
where $\tilde{\mathcal{H}}(\textbf{f}^{\; \varepsilon}_0| \overline{\textbf{f}}_0)=0$ thanks to the well-prepared initial data.\\
At this point, the definition of the relative entropy (\ref{relative_entropy_def}) and the expansion of Lemma \ref{lemma-perturbed-maxwellians} imply that
\begin{align*}
\tilde{\mathcal{H}}(\textbf{f}^{\; \varepsilon}| \bar{\textbf{f}})=\eta''(\bar{u})|u-\bar{u}|^2 + O(\varepsilon) \quad \text{in  } L^\infty_x,
\end{align*}
which ends the proof after an application of the Gronwall inequality.
\end{proof}
We are now ready to prove our result.
\begin{proof}[of Theorem \ref{thm-main}]
As a direct consequence of Lemma \ref{lemma_2_derivative_entropy}, the assumptions of Theorem \ref{thm:relative-entropy-estimate} are satisfied for $t \in [0, T^*]$, with $T^*=c(\|u_0\|_{s+1})$ defined in (\ref{def-time}). Therefore, Theorem \ref{thm:relative-entropy-estimate} provides the following estimate:
\begin{equation}\label{estimate-L2-relative-entropy}
\sup_{t \in [0, T^*]} \|(u-\bar{u})(t)\|_0 \le c(M) \sqrt{\varepsilon},
\end{equation}
where $T^*$ is defined in (\ref{def-time}) and $c(M)$ depends on $M$ in (\ref{def-time}). From (\ref{estimate-M}), we also recall that
$$\|u(t)\|_s \le  (M_0+O(\varepsilon)) e^{c(M) t} \le c_S M, \quad t\in [0, T^*],$$
i.e.
\begin{equation}\label{estimate-infty-1}
|u(t)|_\infty \le c_S (M_0+O(\varepsilon)) e^{c(M) t} \le c_S M, \quad t\in [0, T^*].
\end{equation}
The Interpolation Theorem for Sobolev spaces provides the following bound for any $s' \in (0, s)$:
\begin{align*}
\|(u-\bar{u})(t)\|_{s'} & \le \|(u-\bar{u})(t)\|_0^{1-s'/s} \|(u-\bar{u})(t)\|_s^{s'/s}\\
& \le c(M) \varepsilon^{\frac{s-s'}{2s}} (M_0+(M_0+O(\varepsilon)) e^{C(M)t})^{s'/s},
\end{align*}
where the last inequality follows from (\ref{estimate-L2-relative-entropy}) and (\ref{estimate-infty-1}). By using embedding properties,
\begin{align*}
|(u-\bar{u})(t)|_{\infty} \le c_S \|(u-\bar{u})(t)\|_{s'} \le c(M) \varepsilon^{\frac{s-s'}{2s}} (M_0+(M_0+O(\varepsilon)) e^{C(M)t})^{s'/s},
\end{align*}
i.e.
\begin{equation}\label{estimate-infty-11}
\begin{aligned}
|u(t)|_\infty & \le |\bar{u}|_\infty + c(M) \varepsilon^{\frac{s-s'}{2s}} (M_0+(M_0+O(\varepsilon)) {e^{C(M)t})}^{s'/s}\\
& \le |u_0|_\infty + c(M) \varepsilon^{\frac{s-s'}{2s}} (M_0+(M_0+O(\varepsilon)) {e^{C(M)t})}^{s'/s}\\
& \le c_S M_0 + c(M) \varepsilon^{\frac{s-s'}{2s}} (M_0+(M_0+O(\varepsilon)) {e^{C(M)t})}^{s'/s}.
\end{aligned}
\end{equation}
Recalling now the definition of $T^*$ in (\ref{def-time}) and choosing $M=4M_0$, estimate (\ref{estimate-infty-11}) implies that, for $t \le T^*$,
\begin{equation}\label{ineq:Linfty}
|u(t)|_\infty \le c_S M_0 + c\varepsilon^{\frac{1}{2}-\delta} < 2 c_S M_0=c_S \frac{M}{2}, \quad \delta=\frac{s'}{2s}.
\end{equation}
Now, let us assume that $T^* < T^\varepsilon$. Therefore, by definition of $T^*$ in (\ref{def-time}), one gets
\begin{align*}
|u(T^*)|_\infty = 4 c_S M_0 = c_S M.
\end{align*}
On the other hand, from (\ref{ineq:Linfty}), there exists an arbitrarily small fixed $\varepsilon_0$ such that, for all $\varepsilon \le \varepsilon_0$,
\begin{align*}
|u(T^*)|_\infty \le c_S M_0 + c \varepsilon^{\frac{1}{2}-\delta} \le c_S \frac{M}{2}.
\end{align*}
Therefore $T^* \ge T_\varepsilon$ by contradiction and $\sup_{t \in [0, T]} |u(t)|_\infty$ is bounded for any fixed interval of time $[0,T]$ by employing a classical continuation argument. The same reasoning applies to the relative entropy estimate in Theorem \ref{thm:relative-entropy-estimate}, which holds as far as the derivatives of the Maxwellians are positive (as far as $u$ is bounded) as in Lemma \ref{lemma-positive-Max}, and then the strong convergence result is valid for any fixed interval of time $[0, T]$.
\end{proof}
\section*{Conclusive remark}
We point out that the main difficulty in applying this method to general diffusive vector-BGK models as the ones introduced in \cite{BGN} is to find a symmetrizer as $\Sigma$ in Section \ref{sec:nonlinear} to get uniform bounds of the densities $f_i^\varepsilon$, as done in \cite{Bianchini1} for 2D Navier-Stokes. The rest, up to heavy computations, is straightforward.

\section*{Acknowledgements}
The author thanks Roberto Natalini for useful comments on the Introduction of this work.\\
This paper was partially funded by the GNAMPA (INdAM) project \emph{Partially dissipative hyperbolic systems
with applications to biological models} 2019.\\
This project has received funding from the European Research Council (ERC) under the European Union's Horizon 2020 research and innovation program Grant agreement No 637653, project BLOC ``Mathematical Study of Boundary Layers in Oceanic Motion’’. This work was  supported by the SingFlows project, grant ANR-18-CE40-0027 of the French National Research Agency (ANR).

%
%
%

\end{document}